\newtheorem*{remark}{Remark}
\newtheorem{theorem}{Theorem}[section]
\newtheorem{corollary}{Corollary}[theorem]
\newtheorem{lemma}[theorem]{Lemma}
\newtheorem{proposition}[theorem]{Proposition}
\newtheorem{assumption}[theorem]{Assumption}
\newtheorem{definition}[theorem]{Definition}
\newcommand{\bm}[1]{\mathbf{#1}}
\newcommand{\inner}[2]{\left\langle #1, #2 \right\rangle}
\newcommand{\abs}[1]{\left|#1\right|}
\DeclareMathOperator{\trace}{trace}
\newcommand{\qquant}[1]{q\text{-quant}\left(#1\right)}
\newcommand{\sigmin}[1]{\sigma_{\text{min}}(A)}
\newcommand{\sigminsquared}[1]{\sigma_{\text{min}}^2(A)}
\newcommand{\expect}[2]{\mathbb{E}_{#2}#1}
\newcommand{\conexpect}[3]{\mathbb{E}_{#2}\left[#1 \;|\; #3\right]}
\newcommand{\distfunc}{\Gamma (\mu,\sigma,\mu',\sigma')}
\newcommand{\noisediff}[1]{\frac{\lVert \vn_{B_{#1}\setminus S_{#1}}^{(#1)} \rVert_2^2}{\lVert A_{B_{#1}\setminus S_{#1}} \rVert_{F}^2}}
\newcommand{\noisediffb}[1]{\lVert \vn^{(#1)} \rVert_1^2}
\newcommand{\sysconst}{\zeta}
\newcommand{\qrkprob}{p}
\newcommand{\horizon}[1]{\gamma_{#1}}
\def\va{{\bm{a}}}
\def\vb{{\bm{b}}}
\def\vc{{\bm{c}}}
\def\vn{{\bm{n}}}
\def\vv{{\bm{v}}}
\def\vx{{\bm{x}}}
\title[On Quantile Randomized Kaczmarz for Time-Varying Noise and Corruption]{On Quantile Randomized Kaczmarz for Linear Systems with Time-Varying Noise and Corruption}
\author{Nestor Coria$^*$}
\address{}
\curraddr{}
\email{}
\thanks{The authors are grateful to and were partially supported by NSF DMS \#2211318. \\$^*$These authors contributed equally.}
\author{Jamie Haddock}
\address{Department of Mathematics, Harvey Mudd College,
Claremont, CA 91711}
\email{jhaddock@g.hmc.edu}
\author{Jaime Pacheco$^*$}
\email{}
\begin{document}
\maketitle

\begin{abstract}
    Large-scale systems of linear equations arise in machine learning, medical imaging, sensor networks, and in many areas of data science.  When the scale of the systems are extreme, it is common for a fraction of the data or measurements to be corrupted. The Quantile Randomized Kaczmarz (QRK) method is known to converge on large-scale systems of linear equations $A\vx=\vb$ that are inconsistent due to \emph{static} corruptions in the measurement vector $\vb$. We prove that QRK converges even for systems corrupted by \emph{time-varying} perturbations.
 Additionally, we prove that QRK converges up to a \emph{convergence horizon} on systems affected by time-varying noise and corruption.
 Finally, we utilize Markov's inequality to prove a lower bound on the probability that the largest entries of the QRK residual reveal the time-varying corruption in each iteration.  We present numerical experiments which illustrate our theoretical results.

 \smallskip
\noindent \textbf{Keywords.} randomized Kaczmarz method, quantile methods, noisy linear systems, corrupted linear systems, time-varying perturbations

\smallskip
\noindent \textbf{AMS MSC.} 65F10, 65F20, 65F22
\end{abstract}

\section{Introduction}
Solving large-scale systems of linear equations is one of the most commonly encountered problems across the data-rich sciences.  This problem arises in machine learning, as subroutines of several optimization methods~\cite{boyd2004convex}, in medical imaging~\cite{GBH70:Algebraic-Reconstruction,HM93:Algebraic-Reconstruction}, in sensor networks~\cite{savvides2001dynamic}, and in statistical analysis, to name only a few. Stochastic or randomized iterative methods have become increasingly popular approaches for a variety of large-scale data problems as these methods typically have low-memory footprint and are accompanied by attractive theoretical guarantees.  Indeed, the scale of modern problems often make application of direct or non-iterative methods challenging or infeasible~\cite{saad2003iterative}.  Examples of randomized iterative methods which have found popularity in recent years include the stochastic gradient descent method~\cite{robbins1951stochastic} and the randomized Kaczmarz method~\cite{SV09:Randomized-Kaczmarz}.

Large-scale data, where iterative methods typically find superiority over direct methods, nearly always suffers from corruption -- from error in measurement or data collection, in storage, in transmission and retrieval, and in data handling.  In this work, we consider solving linear problems where a few measurements or components have been subjected to corruptions that disrupt solution by usual techniques.
Problems in which a small number of untrustworthy data can have a devastating effect on a variable of interest have been considered in ~\cite{awasthi2014power,lai2016agnostic,charikar2017learning,diakonikolas2019robust}.
In particular, linear problems with a small number of outlier measurements have led to an interest in methods for robust linear regression~\cite{rousseeuw1984least,vivsek2006least,NIPS2017_e702e51d}.
Other relevant work includes min-k loss stochastic gradient descent (SGD)~\cite{pmlr-v108-shah20a}, robust SGD~\cite{diakonikolas2019sever,prasad2020robust}, and Byzantine approaches~\cite{blanchard2017machine,alistarh2018byzantine}.

In application areas where robust linear regression and robust numerical linear algebra are most impactful, data perturbations are often introduced in a streaming and non-static manner (e.g., during distributed data access~\cite{li2022distributed}). For example, when solving a system of linear equations that is large enough to be distributed amongst various storage servers, accessing subgroups of equations at different times from these servers may introduce a varying amount of noise and corruption (or none) into the accessed measurement; \emph{a single equation could be corrupted or noisy in one access but not in another}.   The previously considered model of static corruption in the measurement vector is no longer sufficient; the measurement vector, noise, and corruption should be modeled in a time-varying manner. Problems with time-varying perturbation also arise in sensor networks~\cite{savvides2001dynamic} and medical imaging~\cite{HM93:Algebraic-Reconstruction,GBH70:Algebraic-Reconstruction}.

\subsection{Problem set-up}\label{subsec:problem}

In this paper, we consider solving highly overdetermined linear systems which are perturbed by both noise and corruption that vary with time. We are considering the situation in which we have a matrix $A \in \mathbb{R}^{m \times n}$ and a vector $\vb \in \mathbb{R}^m$ which define a highly overdetermined consistent system $A\vx = \vb$, with $m \gg n$.  We assume the measurement matrix is full-rank, $\text{rank}(A) = n$, and let $\vx^* \in \mathbb{R}^n$ denote the solution to the consistent system $A\vx^* = \vb$.  We will represent the noise and corruption as vectors whose value varies with the iteration; the noise has vector value $\vn^{(k)} \in \mathbb{R}^m$ in the $k$th iteration and the corruption has vector value $\vc^{(k)} \in \mathbb{R}^m$ in the $k$th iteration. These vectors obscure the true value of $\vb$, creating the observed, time-varying measurement vector $\vb^{(k)} = \vb + \vn^{(k)} + \vc^{(k)}$, which forms an inconsistent system with $A$.
 We assume that the values of $\vb, \vn^{(k)},$ and $\vc^{(k)}$ are not known and only $\vb^{(k)}$ is observed.
We assume that the number of corruptions is no more than a fraction $0 < \beta \ll 1$ of the total number of measurements, $\|\vc^{(k)}\|_{\ell_0} := |\text{supp}(\vc^{(k)})| \le \beta m$. Here $\text{supp}(\vx)$ denotes the set of indices of nonzero entries of $\vx$.  We define the set of corrupted equations in the $k$th iteration to be $C_{k} = \text{supp}(\vc^{(k)}) \subset \{ 1,2,\dots,m \}.$

\subsection{Organization}
The remainder of our paper is organized as follows. We discuss related works in Section~\ref{subsec:related work}, define relevant notation and state our assumptions in Section~\ref{subsec:notation}, provide detailed pseudocode of the \emph{quantile randomized Kaczmarz (QRK)} method in Section~\ref{subsec:QRK}, and present our main contributions in Section~\ref{subsec:contributions}.  In Section~\ref{sec:theoretical results} we present statements and proofs of our theoretical results, and in Section~\ref{sec:numerical experiments} we empirically demonstrate our theoretical results with numerical experiments on synthetic data.  Finally, we provide some conclusions and discussion in Section~\ref{sec:conclusion}.

\subsection{Related work}\label{subsec:related work}

The Kaczmarz methods, which are a classical example of \emph{row-action} iterative methods, produce iterates as sequential orthogonal projections towards the solution set of a single equation~\cite{Kac37:Angenaeherte-Aufloesung}; the $j$th iterate is recursively defined as
\begin{equation}
    \vx^{(j)} = \vx^{(j-1)} - \frac{\va_{i_{j}}^T \vx^{(j-1)} - b_{i_j}}{\|\va_{i_{j}}\|^2} \va_{i_j} \label{eq:RKupdate}
\end{equation}
where $\va_{i_j}$ is the $i_j$th row of $A$, $b_{i_j}$ is the $i_j$th entry of $\vb$, and $i_j \in \{1, 2, \cdots, m\}$ is the equation selected in the $j$th iteration.
These methods saw a renewed surge of interest after the elegant convergence analysis of the \emph{Randomized Kaczmarz (RK) method} in~\cite{SV09:Randomized-Kaczmarz}; the authors showed that for a consistent system with unique solution $\vx^*$, RK (where $i_j = l$ with probability $\|\va_l\|^2/\|A\|_F^2$) converges at least linearly in expectation with the guarantee
\begin{equation}
    \mathbb{E}\|\vx^{(k)} - \vx^*\|^2 \le \left(1 - \frac{\sigma_{\min}^2(A)}{\|A\|_F^2}\right)^k \|\vx^{(0)} - \vx^*\|^2, \label{eq:RKrate}
\end{equation}
where $\sigma_{\min}(A)$ is the minimum singular-value of the matrix $A$.  Many variants and extensions followed, including convergence analyses for inconsistent and random linear systems~\cite{Nee10:Randomized-Kaczmarz, CP12:Almost-Sure-Convergence}, connections to other popular iterative algorithms~\cite{Ma2015convergence, NSWjournal, Pop01:Fast-Kaczmarz-Kovarik, Pop04:Kaczmarz-Kovarik-Algorithm, frek}, block approaches~\cite{needell2013paved, popa2012kaczmarz}, acceleration and parallelization strategies~\cite{EN11:Acceleration-Randomized, liu2014asynchronous, morshed2019accelerated, moorman2020randomized}, and techniques for reducing noise~\cite{ZF12:Randomized-Extended}.

A recent line of work has introduced \emph{quantile statistics} of the in-iteration residual information into randomized iterative methods to detect and avoid corrupted data in large-scale linear systems~\cite{HNRS20}.  These problems arise in medical imaging, sensor networks, error correction and data
science, where effects of adversarial corruptions in the data could be catastrophic down-stream.  An ill-timed update using corrupted data can destroy the valuable information learned by many updates produced with uncorrupted data, making iterative methods challenging to employ on large-scale data; see Figure~\ref{fig:corrlinsys} for an example of the effect of corrupted data on an iterative method for solving linear systems.  The \emph{quantile randomized Kaczmarz (QRK)} method is a variant of the Kaczmarz method designed to avoid the impacts of a small number of corruptions in the measurement vector.  QRK differs from RK in that, in each iteration, the usual Kaczmarz projection step is only taken if the sampled residual entry magnitude is less than a sufficient fraction of residual entry magnitudes, indicating that this sampled equation is likely not affected by a corruption.  QRK is known to converge in expectation to the solution to the uncorrupted system $\vx$ under mild assumptions~\cite{HNRS20, steinerberger2021quantile, jn21qrk,blockqrk22,HMR23}.  By ensuring that updates are generated using only data with residual magnitude less than a sufficiently-small quantile statistic of the entire residual, QRK can avoid updates corresponding to highly corrupted data.  In this paper, we show that QRK converges in the more general case that the system is affected by time-varying corruption and noise.

\begin{figure}
\begin{minipage}{0.4\textwidth}
\definecolor{uuuuuu}{rgb}{0.26666666666666666,0.26666666666666666,0.26666666666666666}
\definecolor{qqqqff}{rgb}{0.,0.,1.}
\begin{tikzpicture}[scale=0.7,line cap=round,line join=round,>=triangle 45,x=0.7cm,y=0.7cm]
\clip(-4.761865744202146,-0.995090526547853) rectangle (8.23893530695861,7.433382978997327);
\draw [domain=-6.761865744202146:8.23893530695861] plot(\x,{(--7.3696--1.56*\x)/1.1});
\draw [domain=-6.761865744202146:8.23893530695861] plot(\x,{(--9.996--1.62*\x)/1.74});
\draw [domain=-6.761865744202146:8.23893530695861] plot(\x,{(--8.8592--1.64*\x)/1.44});
\draw [domain=-6.761865744202146:8.23893530695861] plot(\x,{(--11.1328--1.6*\x)/2.04});
\draw [domain=-6.761865744202146:8.23893530695861] plot(\x,{(--12.2696--1.58*\x)/2.34});
\draw [domain=-6.761865744202146:8.23893530695861,color=blue] plot(\x,{(--16.4664-4.4*\x)/-3.38});
\draw [domain=-6.761865744202146:8.23893530695861,color=blue] plot(\x,{(--22.1136-4.94*\x)/-2.82});
\draw (-1.9838328167953874,4.022105723941761) node[anchor=north west] {$\vx$};
\draw (5.627684753608404,0.266333587887185) node[anchor=north west] {$\vx_0$};
\draw [dash pattern=on 5pt off 5pt] (5.618740594239209,-0.07862180595980683)-- (-0.6432106064704927,5.419676809297492);
\draw (-1.7082475926783225,6.349991791969049) node[anchor=north west] {$\vx_1$};
\draw [dash pattern=on 5pt off 5pt] (-0.6432106064704927,5.419676809297492)-- (4.733088780528801,1.2897013711025815);
\draw (4.066527656226953,2.3886477826500872) node[anchor=north west] {$\vx_2$};
\begin{scriptsize}
\draw [fill=qqqqff] (-1.96,3.92) circle (2.5pt);
\draw [fill=black] (5.618740594239209,-0.07862180595980683) circle (1.5pt);
\draw [fill=uuuuuu] (-0.6432106064704927,5.419676809297492) circle (1.5pt);
\draw [fill=uuuuuu] (4.733088780528801,1.2897013711025815) circle (1.5pt);
\end{scriptsize}
\end{tikzpicture}
\end{minipage}
\caption{A linear system in which some equations (blue) have been corrupted and two iterations of a Kaczmarz method.}\label{fig:corrlinsys}
\end{figure}
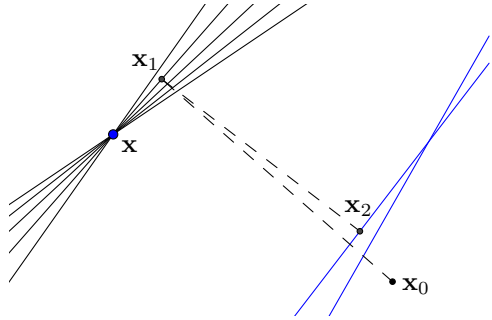

The setting of time-varying and random noise in the system measurements has been considered previously in the Kaczmarz literature.  In~\cite{marshall2023optimal}, the authors consider the scenario that the measurements, $\vb$, of a consistent system are perturbed by independent mean zero random noise with bounded variance, and derive an optimal learning rate (or relaxation parameter) schedule for the randomized Kaczmarz method which minimizes the expected error.  In~\cite{tondji2023adaptive}, the authors consider an \emph{independent noise} model in which, each time the system is sampled, the entries of the sample are assumed to be perturbed by freshly sampled independent mean zero random noise values with bounded variance, and derive adaptive step sizes that allow the block Bregman-Kaczmarz method to converge to the solution of the unperturbed system.  Both of these works develop methods which converge to the solution of the unperturbed (no noise) system, breaking the so-called \emph{convergence horizon} suffered by the standard RK method applied to systems with noise~\cite{Nee10:Randomized-Kaczmarz}.  On the other hand, our paper will be concerned with quantifying the effect of general time-varying noise and corruption on the convergence horizon and rate of QRK.

\subsection{Notation and assumptions}\label{subsec:notation}
We will use boldfaced lower-case Latin letters (e.g., $\vx$) to denote vectors and unbolded upper-case Latin letters (e.g., $A$) to denote matrices.  We use unbolded lower-case Latin and Roman letters (e.g., $q$ and $\beta$) to denote scalars.  We let $[m]$ denote the set $\{1, 2, \cdots, m\}$.  Given a vector $\vv \in \mathbb{R}^n$, we let $v_i$ denote the $i$th component of $\vv$. Throughout this paper, we will consider only real-valued matrices and vectors. If $A$ is an $m\times n$ matrix and $S\subset [m],$ then let $A_S$ denote the matrix obtained by restricting to the rows indexed by $S$ and let $A_{\not\in S}$ denote the matrix obtained by removing from $A$ the rows indexed by $S$.  We denote the $i$th row of $A$ by $\va_i \in \mathbb{R}^n$ for $i \in [m]$. For $q \in [0,1]$, we let $\qquant{S}$ denote the empirical $q$-quantile of a multi-set $S$, the $\lfloor  q|S| \rfloor$-\emph{th} smallest element of $S$; that is, $s \in S$ such that $|\{r \in S: r \le s\}| = \lfloor  q|S| \rfloor$.

The notation $\|\vv\|$ denotes the Euclidean norm of a vector $\vv$. All other $p$-norms of vectors $\vv$ are denoted with subscript, $\|\vv\|_p$.

For a matrix $A$, we denote its operator norm by $\|A\| = \sup_{\vx \in S^{n-1}} \|A\vx\|$ and its Frobenious (or Hilbert-Schmidt) norm by $\|A\|_F =\sqrt{\trace(A^\top A)}$.  Throughout, we denote by $\sigmin{A}$ and $\sigma_{\max}(A)$ the smallest and largest singular values of the matrix $A$ (that is, eigenvalues of the matrix $\sqrt{A^\top A}$).  Moreover, we always assume that the matrix $A$ has full column rank, so that $\sigma_{\min}(A) > 0$.

Our convergence analyses involve several random variables; the random sample of index in the $k$th iteration, $i_k$, and in some cases, the random noise introduced into the system measurements in the $k$th iteration, $\vn^{(k)}$. In particular, the algorithms will make use of the uniform distribution, $\text{Unif}(\cdot)$.  For a finite multi-set $S$, we have $\text{Unif}(S)$ denote the uniform distibution over the elements of $S$; that is, $s \in S$ is sampled with probability $1/|S|$.

We will be interested in $\expect{\|\vx^{(k+1)} - \vx^*\|^2}{i_k}$ which is the expectation of the squared error of $\vx^{(k+1)}$ with respect to the sampled index in the $k$th iteration, $i_k$, where the sampling is according to the distribution in context, i.e., the Kaczmarz sampling distribution.

We will also be interested in $\conexpect{\|\vx^{(k+1)} - \vx^*\|^2}{i_k}{i_k \in S_{k}}$ which is the conditional expectation of the squared error of $\vx^{(k+1)}$ with respect to the sampled index in the $k$th iteration, $i_k$, conditioned upon $i_k \in S_{k}$, where the sampling is according to the distribution in context, i.e., the Kaczmarz sampling distribution.

We'll let $\expect{\|\vx^{(k+1)} - \vx^*\|^2}{i_1, \cdots, i_k}$ denote the iterated expectation with respect to all sampled indices and will let $\expect{\|\vx^{(k+1)} - \vx^*\|^2}{}$ denote the expectation with respect to the joint distribution of all random variables.

Our convergence results will hinge upon a submatrix conditioning parameter introduced in~\cite{steinerberger2021quantile}; we define
\begin{equation}
    \sigma_{q-\beta,\min}(A) = \min_{\substack{S \subset [m]\\|S| = \lfloor(q-\beta)m\rfloor}} \inf_{\|\vx\| = 1} \|A_S\vx\|.
\end{equation}
This parameter measures the smallest singular value encountered in any submatrix determined by a $q-\beta$-fraction of the rows of $A$.  Our results will also depend upon three parameters which determine the convergence rate and convergence horizon; we define these terms in Definition~\ref{def:conv_params}.

\begin{definition}\label{def:conv_params}
We define a rate parameter,
    \[\varphi = \frac{\sigma_{q-\beta,\text{\normalfont{min}}}^2(A)}{q m}\bigg(\frac{q-\beta}{q}\bigg) - \frac{\sigma_{\text{\normalfont{max}}}^2(A)}{q m}\bigg(\frac{2\sqrt{\beta(1-\beta)}}{(1-q-\beta)}+ \frac{\beta(1-\beta)}{(1-q-\beta)^2}\bigg) - \frac{\sigma_{\text{\normalfont{max}}}(A)}{q m}\bigg( \frac{\sqrt{\beta m}}{m(1-q-\beta)}+\frac{\beta\sqrt{m(1-\beta)}}{m(1-q-\beta)^2} \bigg), \]
a parameter governing the influence of the quantile and corruption rate on the convergence horizon,
    \[\sysconst = \frac{\sigma_{\text{\normalfont{max}}}(A)}{q m}\bigg( \frac{\sqrt{\beta m}}{m(1-q-\beta)}+\frac{\beta\sqrt{m(1-\beta)}}{m(1-q-\beta)^2} \bigg) +\frac{\beta}{q m^2(1-q-\beta)^2},\]
and a parameter measuring the convergence horizon,
\[\horizon{k} = \sum_{j=0}^{k} (1 - p\varphi)^{k-j} \left(\frac{\|\vn^{(k)}\|_2^2}{(q-\beta)m} + \sysconst \noisediffb{j}\right).\]
Here, $p$ is a fixed probability defined below.
\end{definition}

We list next two assumptions made throughout this paper.

\begin{assumption}\label{ass:unit_rows}
    The matrix $A \in \mathbb{R}^{m \times n}$ has rows with unit norm, $\|\va_i\| = 1$ for all $i \in [m]$.
\end{assumption}

\begin{assumption}\label{ass:pos_rate}
    The rate parameter is positive, $\varphi > 0$.
\end{assumption}
We note that Assumption~\ref{ass:pos_rate} is similar to the assumption $c_{A,\beta,q} > 0$ in \cite[Theorem (Main Result)]{steinerberger2021quantile}.

\subsection{Quantile randomized Kaczmarz}\label{subsec:QRK}
As mentioned previously, QRK is a variant of RK where, in each iteration, the update is made only if the sampled residual entry magnitude is less than a sufficient fraction of residual entry magnitudes.  The algorithm uses this residual magnitude comparison as a proxy indicator for whether it is affected by a corruption.

Practically, there are two implementations of the QRK algorithm.  In the first, we randomly sample from the entire residual and only make the update if the sampled residual magnitude is less than a sufficient fraction of the other residual entry magnitudes. We note that a fraction of Algorithm~\ref{QuantileRK1} iterations will not update the iterates; this behavior is key for this corruption-robust algorithm. Pseudocode for this implementation is given in Algorithm~\ref{QuantileRK1}; this implementation is due to~\cite{HNRS20}.  In the second, we randomly sample only from the residual entries whose magnitude is less than a sufficient fraction of the other residual entry magnitudes and make the update in each iteration.  Pseudocode for this implementation is given in Algorithm~\ref{QuantileRK2}; this implementation is due to~\cite{steinerberger2021quantile}. We note that one of our goals is a unified analysis of Algorithms~\ref{QuantileRK1} and~\ref{QuantileRK2}.
\begin{algorithm}
	\caption{QuantileRK~\cite{HNRS20}}\label{QuantileRK1}
	\begin{algorithmic}[1]
		\Procedure{QuantileRK1}{$A,\vb$, $\vx^{(0)}$, q, N}
		\For{j = 1, \ldots, N}
		\State{sample $i_j\sim\text{Unif}([m])$}
		\If{ $\abs{\inner{\va_{i_j}}{\vx^{(j-1)}} - b_{i_j}} \leq \qquant{\left\{\abs{\inner{\va_i}{\vx^{(j-1)}} - b_i}\right\}_{i=1}^m} $}
		\State{$\vx^{(j)} = \vx^{(j-1)} - \frac{\left(\inner{\va_{i_j}}{\vx^{(j-1)}} - b_{i_j}\right)}{\|\va_{i_j}\|^2} \va_{i_j}$}
		\Else
		\State{$\vx^{(j)} = \vx^{(j-1)}$}

		\EndIf

		\EndFor{}

		\Return{$\vx^{(N)}$}
		\EndProcedure
	\end{algorithmic}
\end{algorithm}

\begin{algorithm}
	\caption{QuantileRK~\cite{steinerberger2021quantile}}\label{QuantileRK2}
	\begin{algorithmic}[1]
		\Procedure{QuantileRK2}{$A,\vb$, $\vx^{(0)}$, q, N}
		\For{j = 1, \ldots, N}
		\State{sample $i_j\sim\text{Unif}\left(\left\{l : \abs{\inner{\va_l}{\vx^{(j-1)}} - b_l} \leq \qquant{\left\{\abs{\inner{\va_i}{\vx^{(j-1)}} - b_i}\right\}_{i=1}^m} \right\}\right)$}
		\State{$\vx^{(j)} = \vx^{(j-1)} - \frac{\left(\inner{\va_{i_j}}{\vx^{(j-1)}} - b_{i_j}\right)}{\|\va_{i_j}\|^2} \va_{i_j}$}

		\EndFor{}

		\Return{$\vx^{(N)}$}
		\EndProcedure
	\end{algorithmic}
\end{algorithm}

These implementations may be analyzed in the same way; we will provide a convergence analysis valid for both in Theorem~\ref{thm:steinerbergerQRKwNoise} and provide a proof in Section~\ref{sec:theoretical results}.

\subsection{Main contributions}\label{subsec:contributions}

In this section, we list our main results.  Our first main result proves that QRK converges at least linearly in expectation up to a convergence horizon even in the case of time-varying corruption and noise.  We note that our bound on the expected error includes two terms; the first term decreases linearly and depends upon $\varphi$, which is independent of the noise $\vn^{(k)}$, and the second term defines the convergence horizon for the method and depends upon $\horizon{k}$, which does depend on the noise $\vn^{(k)}$.

\begin{theorem} \label{thm:steinerbergerQRKwNoise}
Let $\vx^{(k)}$ denote the iterates of Algorithm~\ref{QuantileRK1} or Algorithm~\ref{QuantileRK2} applied with quantile $q$ to the system defined by $A$ and $ \vb^{(k)} = \vb + \vn^{(k)} + \vc^{(k)}$, in the $k$th iteration. Assuming the setup described in Section~\ref{subsec:problem}, Assumptions~\ref{ass:unit_rows} and~\ref{ass:pos_rate}, and $\beta<q<1-\beta$ arbitrary, then
\[
\expect{\lVert \vx^{(k+1)}-\vx^*\rVert^2}{\;} \leq (1 - p\varphi)^{k+1}\lVert\vx^{(0)}-\vx^*\rVert^2 + p\horizon{k},
\]
\noindent where $p$ represents the probability that a row is selected from within the $q$-quantile in each iteration, \[p = \begin{cases} q & \text{ applying QuantileRK1 (Algorithm~\ref{QuantileRK1})}\\ 1 & \text{ applying QuantileRK2 (Algorithm~\ref{QuantileRK2})} \end{cases},\] and $\varphi$ and $\horizon{k}$ are as given in Definition~\ref{def:conv_params}.
\end{theorem}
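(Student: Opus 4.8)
The plan is to prove the theorem by establishing a single-iteration recursion, conditional on the current iterate $\vx^{(k)}$ and on the sampled perturbations $\vn^{(k)},\vc^{(k)}$, of the form $\expect{\|\vx^{(k+1)}-\vx^*\|^2}{i_k} \le (1-p\varphi)\|\vx^{(k)}-\vx^*\|^2 + p\left(\noisediff{k} + \sysconst\noisediffb{k}\right)$, and then taking iterated expectations over $i_1,\dots,i_k$ and unrolling the geometric recursion to obtain the stated bound, with $\horizon{k}$ emerging as the accumulated horizon sum.

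First I would expand one update. Writing $r_i := \inner{\va_i}{\vx^{(k)}} - b_i = \inner{\va_i}{\vx^{(k)}-\vx^*}$ for the true residual and letting $\tilde r_i := \inner{\va_i}{\vx^{(k)}} - b_i^{(k)} = r_i - n_i^{(k)} - c_i^{(k)}$ be the observed residual, Assumption~\ref{ass:unit_rows} makes the projection a rank-one update and gives $\|\vx^{(k+1)}-\vx^*\|^2 = \|\vx^{(k)}-\vx^*\|^2 - 2\tilde r_{i_k} r_{i_k} + \tilde r_{i_k}^2$ whenever an update is taken. Conditioned on an update, both algorithms sample $i_k$ uniformly from the accepted set $B_k = \{i : |\tilde r_i| \le \qquant{\{|\tilde r_j|\}_{j=1}^m}\}$; Algorithm~\ref{QuantileRK1} takes such a step only with probability $p = |B_k|/m$, while Algorithm~\ref{QuantileRK2} always does. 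Writing the expectation over $i_k$ as a convex combination of the no-update event (which leaves the error unchanged) and the update event accounts for the factor $p$: if the conditional-on-update error contracts by $(1-\varphi)$ up to horizon terms, the unconditional one-step error contracts by $(1-p\varphi)$.

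Next I would split the accepted set as $B_k = (B_k\setminus S_k)\cup S_k$, where $S_k := B_k\cap C_k$ collects the corrupted rows that are nonetheless accepted and $B_k\setminus S_k$ the uncorrupted accepted rows. On the uncorrupted rows the corruption vanishes and the per-row change collapses to $-r_i^2 + (n_i^{(k)})^2$; averaging over $B_k$ and using $\sum_{i\in B_k\setminus S_k} r_i^2 = \|A_{B_k\setminus S_k}(\vx^{(k)}-\vx^*)\|^2 \ge \sigma_{q-\beta,\min}^2(A)\|\vx^{(k)}-\vx^*\|^2$, valid because $B_k\setminus S_k$ contains at least a $(q-\beta)$-fraction of the rows, produces the leading contraction term of $\varphi$, while the $(n_i^{(k)})^2$ terms are controlled by $\noisediff{k}$ after comparing the normalizations $1/|B_k| \le 1/|B_k\setminus S_k| = 1/\|A_{B_k\setminus S_k}\|_F^2$.

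The main obstacle is bounding the corrupted-but-accepted rows $S_k$, whose entries of $\vc^{(k)}$ are arbitrary and can drive the iterate away from $\vx^*$. The key is a lemma bounding the quantile threshold $\tau_k := \qquant{\{|\tilde r_j|\}_{j=1}^m}$: since $\tau_k$ is the $\lfloor qm\rfloor$th smallest observed magnitude, at least $(1-q-\beta)m$ uncorrupted rows have $|\tilde r_i| \ge \tau_k$, so summing $r_i^2$ over those rows simultaneously lower-bounds a multiple of $(1-q-\beta)m\,\tau_k^2$ (up to noise) and upper-bounds $\sigma_{\max}^2(A)\|\vx^{(k)}-\vx^*\|^2$, which yields a bound of the form $\tau_k \le \frac{\sigma_{\max}(A)}{\sqrt{(1-q-\beta)m}}\|\vx^{(k)}-\vx^*\| + (\text{noise})$. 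Feeding this into $\sum_{i\in S_k}(\tilde r_i^2 - 2\tilde r_i r_i)$ via Cauchy--Schwarz and the submatrix bound $\|A_{S_k}\| \le \sigma_{\max}(A)$, together with $|S_k| \le \beta m$, produces exactly the $\sigma_{\max}$-dependent corrections subtracted in $\varphi$ and the coefficient of $\sysconst$; the counting of corrupted against uncorrupted rows relative to $\tau_k$ is what generates the $\beta$, $(1-\beta)$, and $(1-q-\beta)$ factors in those constants, and the residual noise contributions collect into $\sysconst\noisediffb{k}$. This threshold lemma and its bookkeeping are the technical heart of the argument; assembling the three estimates gives the one-step recursion, and taking the full expectation and unrolling yields $(1-p\varphi)^{k+1}\|\vx^{(0)}-\vx^*\|^2 + p\horizon{k}$, with Assumption~\ref{ass:pos_rate} ensuring $0 < 1-p\varphi < 1$ so that the homogeneous part contracts.
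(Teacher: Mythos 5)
Your plan reproduces the architecture of the paper's proof almost exactly: the split of the accepted set $B_k$ into corrupted-but-accepted rows $S_k = B_k \cap C_k$ and uncorrupted rows $B_k \setminus S_k$, the noisy-RK contraction with $\sigma_{q-\beta,\min}(A)$ on the uncorrupted part, Cauchy--Schwarz with $\|A_{S_k}\| \le \sigma_{\max}(A)$ on the corrupted part, the law of total expectation producing the factor $p$, the worst case $|S_k| = \beta m$, and the final unrolling. The genuine gap is in the threshold lemma, the step you yourself call the technical heart. You bound the quantile $\tau_k$ by counting uncorrupted rows with $|\tilde r_i| \ge \tau_k$ and summing \emph{squares}, which yields $\tau_k \le \sigma_{\max}(A)\|\vx^{(k)}-\vx^*\|/\sqrt{(1-q-\beta)m} + (\text{noise})/\sqrt{(1-q-\beta)m}$. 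The paper's Lemma~\ref{lem:quantilebound} instead sums \emph{absolute values}: $m(1-q-\beta)\tau_k \le \sum_{i \notin C_k}|\langle \va_i, \vx^{(k)}\rangle - b_i^{(k)}| \le \sqrt{m(1-\beta)}\,\sigma_{\max}(A)\|\vx^{(k)}-\vx^*\| + \|\vn^{(k)}\|_1$. These two routes give genuinely different constants, so your claim that your bound produces ``exactly'' the corrections in $\varphi$ and the coefficient $\sysconst$ of Definition~\ref{def:conv_params} is false. On the error side your version is in fact tighter ($\frac{2\sqrt{\beta}}{\sqrt{1-q-\beta}} + \frac{\beta}{1-q-\beta}$ in place of $\frac{2\sqrt{\beta(1-\beta)}}{1-q-\beta} + \frac{\beta(1-\beta)}{(1-q-\beta)^2}$), which would be harmless; but on the noise side it is weaker, since your quantile's noise term scales like $m^{-1/2}$ while the paper's scales like $m^{-1}$. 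Concretely, the pure-noise contribution $\frac{|S_k|}{qm}\tau_k^2$ becomes in your version a term of order $\frac{\beta \|\vn^{(k)}\|_2^2}{q(1-q-\beta)m}$, whereas the theorem requires it to be absorbed into $\sysconst \noisediffb{k}$, whose corresponding piece is $\frac{\beta \|\vn^{(k)}\|_1^2}{qm^2(1-q-\beta)^2}$. When the noise is concentrated on a few entries, so that $\|\vn^{(k)}\|_1 \approx \|\vn^{(k)}\|_2$, your term exceeds the allowed one by a factor of order $m(1-q-\beta)$; hence your one-step recursion does not imply the stated bound with $\horizon{k}$ as defined. The repair is to carry out the threshold argument at the $\ell_1$ level, as the paper does.

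A second, smaller omission: the corrupted-row cross term $-2\sum_{i \in S_k}\tilde r_i r_i$, after Cauchy--Schwarz and the quantile bound, necessarily produces a mixed term proportional to $\|\vx^{(k)}-\vx^*\| \cdot \|\vn^{(k)}\|_1$, which fits neither the $(1-p\varphi)\|\vx^{(k)}-\vx^*\|^2$ part nor the $p\left(\noisediff{k} + \sysconst\noisediffb{k}\right)$ part of the recursion you posit. The paper disposes of it by splitting into the cases $\|\vx^{(k)}-\vx^*\| \ge \|\vn^{(k)}\|_1$ and $\|\vx^{(k)}-\vx^*\| \le \|\vn^{(k)}\|_1$ and bounding the product by the larger square; this case analysis is precisely why the same $\sigma_{\max}$-expression appears both subtracted inside $\varphi$ and added inside $\sysconst$. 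Your plan asserts the noise contributions ``collect into'' $\sysconst\noisediffb{k}$ without this step; some such argument (Young's inequality would also work, but again changes the constants away from Definition~\ref{def:conv_params}) is needed before the recursion, and hence the theorem, follows.
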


\begin{remark}
    Now, we note that we have $0 < 1 - p\varphi < 1$. First, by definition of $\sigma_{q-\beta,\min}(A)$ and using the Cauchy-Schwarz inequality in conjunction with Assumption~\ref{ass:unit_rows}, we have $\sigma_{q-\beta,\min}(A) \le \sqrt{(q - \beta)m}$ and thus, $\varphi \le \left(\frac{q - \beta}{q}\right)^2 < 1$, so $ 1 - p\varphi > 0$.  On the other hand, by Assumption~\ref{ass:pos_rate}, we have $ 1 - p\varphi < 1$.
\end{remark}

In Corollary~\ref{cor:single_noisy_cor}, we take stronger assumptions on the form of the noise.  In Corollary~\ref{cor:single_noisy_cor}~\ref{cor:boundednoisecorr}, we specialize Theorem~\ref{thm:steinerbergerQRKwNoise} to the case that the noise in each iteration is uniformly bounded.  In Corollary~\ref{cor:single_noisy_cor}~\ref{cor:defnoise}, we specialize Theorem~\ref{thm:steinerbergerQRKwNoise} to the case that the noise is sampled from a known distribution, and in Corollary~\ref{cor:single_noisy_cor}~\ref{cor:gaussian noise}, we specialize to the case that the noise has entries sampled from a Gaussian distribution.

\begin{corollary}\label{cor:single_noisy_cor}
Let $\vx^{(k)}$ denote the iterates of Algorithm~\ref{QuantileRK1} or Algorithm~\ref{QuantileRK2} applied with quantile $q$ to the system defined by $A$ and $ \vb^{(k)} = \vb + \vn^{(k)} + \vc^{(k)}$, in the $k$th iteration.  We assume the setup described in Section~\ref{subsec:problem}, Assumptions~\ref{ass:unit_rows} and~\ref{ass:pos_rate}, and $\beta<q<1-\beta$ arbitrary.  We let $p$ be the probability that a row is selected from within the $q$-quantile in each iteration, \[p = \begin{cases} q & \text{ applying QuantileRK1 (Algorithm~\ref{QuantileRK1})}\\ 1 & \text{ applying QuantileRK2 (Algorithm~\ref{QuantileRK2})} \end{cases}\] and $\varphi$ and $\sysconst$ are as given in Definition~\ref{def:conv_params}.
\begin{enumerate}[label=(\alph*)]
    \item If $n_{max} \geq \|\vn^{(j)}\|_\infty$ for all $j \in [k+1]$, then \[
\expect{\| \vx^{(k+1)}-\vx^*\|^2}{i_1,\cdots,i_k} \leq (1 - \qrkprob \varphi)^{k+1}\lVert\vx^{(0)}-\vx^*\rVert^2 + (1 + \sysconst m^2)(n_{max})^2 \frac{1-(1 - \qrkprob \varphi)^{k+1}}{\varphi}.
\] \label{cor:boundednoisecorr}
\item If $n_i^{(k)}$ are i.i.d samples from distribution with mean $\mu$ and standard deviation $s$, and the mean and standard deviation of $|n_i^{(k)}|$ are $\mu'$ and $s'$, respectively, then \[
\expect{\| \vx^{(k+1)}-\vx^*\|^2}{} \leq (1-\qrkprob \varphi)^{k+1}\lVert\vx^{(0)}-\vx^*\rVert^2 + \frac{1-(1-\qrkprob \varphi)^{k+1}}{\varphi} \left({\mu^2 + s^2} + \sysconst [m^2(\mu')^2 + m(s')^2]\right).
\] \label{cor:defnoise}
\item If $n_i^{(k)}$ are i.i.d samples from $\mathcal{N}(0, s^2)$, then \[
    \expect{\| \vx^{(k+1)}-\vx^*\|^2}{} \leq (1-\qrkprob \varphi)^{k+1}\lVert\vx^{(0)}-\vx^*\rVert^2 + \frac{1-(1-\qrkprob \varphi)^{k+1}}{\varphi} s^2(1 + \sysconst [m^2 \frac{2}{\pi} + m (1-\frac{2}{\pi})]).
\] \label{cor:gaussian noise}
\end{enumerate}
\end{corollary}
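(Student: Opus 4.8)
The plan is to start from the conclusion of Theorem~\ref{thm:steinerbergerQRKwNoise}, which bounds the expected error by $(1-p\varphi)^{k+1}\lVert\vx^{(0)}-\vx^*\rVert^2 + p\horizon{k}$, and to specialize the convergence-horizon term $p\horizon{k}$ under each noise model. Recall from Definition~\ref{def:conv_params} that
\[
\horizon{k} = \sum_{j=0}^{k}(1-p\varphi)^{k-j}\left(\frac{\lVert \vn^{(j)}_{B_j\setminus S_j}\rVert_2^2}{\lVert A_{B_j\setminus S_j}\rVert_F^2} + \sysconst\lVert \vn^{(j)}\rVert_1^2\right),
\]
so each summand splits into a restricted quadratic noise term and an $\ell_1$ noise term. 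In every part I would first bound or evaluate these two pieces by a single per-iteration constant $c$ independent of $j$, and then collapse the weighted sum using the geometric identity $\sum_{j=0}^{k}(1-p\varphi)^{k-j} = \tfrac{1-(1-p\varphi)^{k+1}}{p\varphi}$ (valid since $\varphi>0$ by Assumption~\ref{ass:pos_rate} and $p>0$). Multiplying by $p$ produces the factor $\tfrac{1-(1-p\varphi)^{k+1}}{\varphi}$ appearing in all three bounds.

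For part~\ref{cor:boundednoisecorr} the bound on each piece is deterministic, so I bound $\horizon{k}$ pathwise and keep only $\EE_{i_1,\dots,i_k}$. By Assumption~\ref{ass:unit_rows}, $\lVert A_{B_j\setminus S_j}\rVert_F^2 = |B_j\setminus S_j|$, and since $|n^{(j)}_i|\le n_{max}$ we get $\lVert \vn^{(j)}_{B_j\setminus S_j}\rVert_2^2 \le |B_j\setminus S_j|\,n_{max}^2$, so the restricted term is at most $n_{max}^2$ for every realization; likewise $\lVert\vn^{(j)}\rVert_1^2 \le (m\,n_{max})^2$. Hence each summand is bounded by $c = (1+\sysconst m^2)n_{max}^2$, and the geometric collapse gives exactly the stated horizon.

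For parts~\ref{cor:defnoise} and~\ref{cor:gaussian noise} I would pass to the full expectation and replace each piece by its mean. Using unit rows, the restricted term is the average of $|B_j\setminus S_j|$ copies of $(n^{(j)}_i)^2$, each with $\EE[(n^{(j)}_i)^2] = \mu^2+\sigma^2$, so its expectation is $\mu^2+\sigma^2$. For the $\ell_1$ term, writing $Y_i = |n^{(j)}_i|$ (i.i.d., mean $\mu'$, variance $(\sigma')^2$), independence gives $\EE[\lVert\vn^{(j)}\rVert_1^2] = \operatorname{Var}(\sum_i Y_i) + (\EE\sum_i Y_i)^2 = m(\sigma')^2 + m^2(\mu')^2$. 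Combining yields $c = (\mu^2+\sigma^2) + \sysconst[m^2(\mu')^2 + m(\sigma')^2]$ and the geometric collapse gives part~\ref{cor:defnoise}. Part~\ref{cor:gaussian noise} is the same computation specialized to $\mathcal{N}(0,\sigma^2)$: here $\mu=0$, and the half-normal moments $\mu' = \sigma\sqrt{2/\pi}$, $(\sigma')^2 = \sigma^2(1-\tfrac2\pi)$ give $m^2(\mu')^2 + m(\sigma')^2 = \sigma^2[m^2\tfrac2\pi + m(1-\tfrac2\pi)]$, so $c = \sigma^2(1 + \sysconst[m^2\tfrac2\pi + m(1-\tfrac2\pi)])$, as claimed.

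The step I expect to require the most care is the claim $\EE\big[\lVert \vn^{(j)}_{B_j\setminus S_j}\rVert_2^2/|B_j\setminus S_j|\big] = \mu^2+\sigma^2$ in parts~\ref{cor:defnoise} and~\ref{cor:gaussian noise}: the index set $B_j\setminus S_j$ is determined by the residual, which depends on the current noise $\vn^{(j)}$, so the averaged entries are not an independently chosen subsample. I would resolve this either by a tower/conditioning argument showing the quantile selection is measurable with respect to information independent of the fresh noise entries that survive the averaging, or, more robustly, by observing that rows admitted to $B_j\setminus S_j$ are exactly those of small residual and hence tend to carry smaller $|n^{(j)}_i|$, so the conditional average of $(n^{(j)}_i)^2$ over $B_j\setminus S_j$ is at most $\mu^2+\sigma^2$, keeping the displayed inequality a valid upper bound. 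The remaining moment evaluations and the geometric sum are routine.
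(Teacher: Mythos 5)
Your proposal is correct and follows essentially the same route as the paper's proof. Part~\ref{cor:boundednoisecorr} is identical: under Assumption~\ref{ass:unit_rows} you bound $\noisediff{j} \le n_{\max}^2$ and $\noisediffb{j} \le m^2 n_{\max}^2$ pathwise and collapse the geometric sum. For parts~\ref{cor:defnoise} and~\ref{cor:gaussian noise} the only structural difference is that the paper re-enters the proof of Theorem~\ref{thm:steinerbergerQRKwNoise} at the per-iteration recursion~\eqref{eq:arbitrary noise theorem}, takes $\mathbb{E}_{\vn^{(k)}}$ there, and inducts, whereas you take expectations of $\horizon{k}$ directly from the theorem's conclusion; the key computations are the same in both, namely $\mathbb{E}_{\vn^{(k)}}\left[\noisediff{k}\right] = \mu^2+\sigma^2$, $\mathbb{E}_{\vn^{(k)}}\left[\noisediffb{k}\right] = m^2(\mu')^2 + m(\sigma')^2$, and the half-normal moments for the Gaussian case.

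One point deserves comment. The dependence issue you flag at the end --- that $B_j\setminus S_j$ is determined by the residual and hence by $\vn^{(j)}$ itself --- is genuine, and the paper does not address it at all: its proof simply writes $\mathbb{E}_{\vn^{(k)}}\left[\noisediff{k}\right] = (\sigma^2+\mu^2)\sum_{i\in B\setminus S_k}\tfrac{1}{|B\setminus S_k|}$, treating the index set as though it were independent of the fresh noise. So on this step you are more careful than the source. Be aware, however, that your fallback heuristic (``rows admitted to $B_j\setminus S_j$ tend to carry smaller $|n_i^{(j)}|$'') is not airtight: when $\|\vx^{(j)}-\vx^*\|$ is large, the small-residual rows are those whose noise is closest to $\langle \va_i, \vx^{(j)}-\vx^*\rangle$, which can bias the selected noise entries toward \emph{larger} magnitudes, so the claimed conditional inequality can fail. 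A fully rigorous patch would require a different argument; since the paper asserts equality at this step with no justification, your proposal matches (and slightly exceeds) the paper's level of rigor rather than falling short of it.
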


Our final main result utilizes Theorem~\ref{thm:steinerbergerQRKwNoise} to provide a lower bound on the probability that the indices of the corrupted equations in the $k$th iteration are given by examining the $k$th residual and identifying the $\lfloor \beta m \rfloor$ largest entries.  That is, \emph{even in the case that the corruption indices are changing and the system is perturbed by time-varying noise, the largest entries of the residual reveal the position of the corruptions}.

\begin{corollary}\label{cor:noisy corruption detection}
Let $\vx^{(k)}$ denote the iterates of Algorithm~\ref{QuantileRK1} or Algorithm~\ref{QuantileRK2} applied with quantile $q$ to the system defined by $A$ and $ \vb^{(k)} = \vb + \vn^{(k)} + \vc^{(k)}$, in the $k$th iteration.
Let $n_{max}$ be the value that satisfies $n_{max} \geq \|\vn^{(j)}\|_\infty$ for all $1 \leq j \leq k+1$, and $\varphi$ and $\sysconst$ are as given in Definition~\ref{def:conv_params}.
Define $c^{(k)}_{\min} = \min_{i \in \text{supp}(\vc^{(k)})} |c_i^{(k)}|,$ and
assume $(1 + \sysconst m^2)\frac{n_{\max}^2}{\varphi}  \le (c_{\min}^{(k)})^2/4M$ for some value $M \gg 1$.
Assuming the setup described in Section~\ref{subsec:problem}, Assumptions~\ref{ass:unit_rows} and~\ref{ass:pos_rate}, and $\beta<q<1-\beta$ arbitrary, then with probability at least
\[\frac{M-1}{M} - \frac{4(1 - p \varphi)^k \|\vx^{(0)} - \vx^*\|^2}{(c^{(k)}_{\min})^2} \]
 where $p$ represents the probability that a row is selected from within the $q$-quantile in each iteration, \[p = \begin{cases} q & \text{ applying QuantileRK1 (Algorithm~\ref{QuantileRK1})}\\ 1 & \text{ applying QuantileRK2 (Algorithm~\ref{QuantileRK2})}, \end{cases}\] we have that by iteration $k$, the (time-varying) position of the corruptions can be identified by evaluating the residual and identifying the $\beta m$ largest entries; that is,
\[\text{supp}(\vc^{(k)}) \subset \text{argmax}_{\substack{S \subset [m], \\|S| = \lfloor\beta m\rfloor}} \sum_{i \in S} |\va_i^\top \vx^{(k)} - b_i^{(k)}|.\]
\end{corollary}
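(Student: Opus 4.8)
The plan is to combine a purely deterministic \emph{separation} property of the residual with Markov's inequality applied to $\|\vx^{(k)}-\vx^*\|^2$, using the convergence guarantee of Corollary~\ref{cor:single_noisy_cor}~\ref{cor:boundednoisecorr} to control the latter. First I would decompose the residual. Since $A\vx^*=\vb$ and $\|\va_i\|=1$, the $i$th entry of the $k$th residual is
\[
\va_i^\top \vx^{(k)} - b_i^{(k)} = \va_i^\top(\vx^{(k)}-\vx^*) - n_i^{(k)} - c_i^{(k)} .
\]
By Cauchy--Schwarz, $|\va_i^\top(\vx^{(k)}-\vx^*)| \le \|\vx^{(k)}-\vx^*\|$. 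Writing $D=\|\vx^{(k)}-\vx^*\|$, the reverse triangle inequality then yields, for each corrupted index $i\in\text{supp}(\vc^{(k)})$, the lower bound $|\va_i^\top \vx^{(k)} - b_i^{(k)}| \ge c^{(k)}_{\min} - D - n_{max}$, and for each uncorrupted index the upper bound $|\va_i^\top \vx^{(k)} - b_i^{(k)}| \le D + n_{max}$.

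Next I would isolate the separation event. Whenever $c^{(k)}_{\min} - D - n_{max} > D + n_{max}$, i.e. $D < c^{(k)}_{\min}/2 - n_{max}$, every corrupted residual magnitude strictly dominates every uncorrupted one. Since the corruption budget forces $|\text{supp}(\vc^{(k)})|\le\lfloor\beta m\rfloor$, the $\lfloor\beta m\rfloor$ largest residual magnitudes must then contain all corrupted indices, which is exactly the claimed containment $\text{supp}(\vc^{(k)}) \subset \text{argmax}_{|S|=\lfloor\beta m\rfloor}\sum_{i\in S}|\va_i^\top \vx^{(k)} - b_i^{(k)}|$. Detection therefore succeeds on the event $\{D < c^{(k)}_{\min}/2 - n_{max}\}$, so it suffices to lower-bound the probability of this event.

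Finally I would apply Markov's inequality at the threshold $c^{(k)}_{\min}/2$ (treating the $n_{max}$ shift as lower order; see below),
\[
\PP\!\left(D \ge \tfrac{c^{(k)}_{\min}}{2}\right) = \PP\!\left(D^2 \ge \tfrac{(c^{(k)}_{\min})^2}{4}\right) \le \frac{4\,\EE[D^2]}{(c^{(k)}_{\min})^2}.
\]
Substituting the iteration-$k$ form of Corollary~\ref{cor:single_noisy_cor}~\ref{cor:boundednoisecorr}, namely $\EE[D^2] \le (1-p\varphi)^k\|\vx^{(0)}-\vx^*\|^2 + (1+\sysconst m^2)(n_{max})^2\tfrac{1-(1-p\varphi)^k}{\varphi}$, and then invoking the hypothesis $(1+\sysconst m^2)\tfrac{n_{max}^2}{\varphi}\le (c^{(k)}_{\min})^2/4M$ to bound the convergence-horizon term by $(c^{(k)}_{\min})^2/4M$, the right-hand side collapses to $\tfrac{4(1-p\varphi)^k\|\vx^{(0)}-\vx^*\|^2}{(c^{(k)}_{\min})^2} + \tfrac1M$. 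Subtracting from $1$ gives the stated probability.

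The main obstacle is reconciling the two thresholds. The deterministic separation genuinely requires $D < c^{(k)}_{\min}/2 - n_{max}$, while the clean final constant is produced by Markov at $c^{(k)}_{\min}/2$; Markov at the smaller threshold would in fact give a weaker denominator, so the $n_{max}$ assumption is what must close the gap. That hypothesis plays a double role: it forces $n_{max}\ll c^{(k)}_{\min}$ (so the threshold shift is of lower order than $c^{(k)}_{\min}/2$) and it is \emph{exactly} the condition that absorbs the noise floor of the convergence horizon into the $1/M$ term. Verifying that the factor of $4$ and the $1/M$ come out precisely — and that the $n_{max}$ perturbation of the separation threshold does not degrade the bound — is the one place requiring care; the residual decomposition and the Markov step are otherwise routine.
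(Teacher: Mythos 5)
Your proposal is structurally the same proof as the paper's: Corollary~\ref{cor:single_noisy_cor}~\ref{cor:boundednoisecorr} at iteration $k$, Markov's inequality at the threshold $(c^{(k)}_{\min})^2/4$, the hypothesis $(1+\sysconst m^2)n_{max}^2/\varphi \le (c^{(k)}_{\min})^2/4M$ absorbing the horizon term into the $1/M$ loss, and then a deterministic separation of corrupted from uncorrupted residual entries on the high-probability event. The one point where you diverge is precisely where you are more careful than the paper: your residual decomposition keeps the noise, whereas the paper's separation step silently drops it --- it writes $|\va_i^\top\vx^{(k)} - b_i^{(k)}| = |\va_i^\top\vx^{(k)} - b_i|$ for $i\notin C_k$ and $|\va_i^\top\vx^{(k)} - b_i^{(k)}| = |\va_i^\top\vx^{(k)} - b_i - c_i^{(k)}|$ for $i\in C_k$, equalities that are valid only when $\vn^{(k)}=\mathbf{0}$ (consistent with the noiseless Remark following the corollary, but not with the noisy setting the corollary claims to cover). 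So the obstacle you flag is genuine, and it sits unacknowledged in the paper's own proof.

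It can be closed, but only with a slightly weaker constant. Under Assumption~\ref{ass:unit_rows}, $\sigma^2_{q-\beta,\min}(A) \le \lfloor(q-\beta)m\rfloor$, hence $\varphi < 1 \le 1+\sysconst m^2$, and the hypothesis then forces $n_{max} \le c^{(k)}_{\min}/(2\sqrt{M})$. Applying Markov at the true separation threshold $c^{(k)}_{\min}/2 - n_{max} \ge \tfrac{c^{(k)}_{\min}}{2}\left(1 - \tfrac{1}{\sqrt{M}}\right)$ then gives the stated failure probability multiplied by $\left(1 - \tfrac{1}{\sqrt{M}}\right)^{-2}$, which tends to $1$ as $M\to\infty$ but is not exactly $1$. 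In short: your structure matches the paper's, your treatment of the noise is the rigorous one, and the corollary's exact constants are obtainable only via the paper's noise-free separation step or after this mild, $M$-dependent degradation.
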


\begin{remark}
    In the noiseless case, when $\vn^{(k)} = \mathbf{0}$ for all iterations $k$, Corollary~\ref{cor:noisy corruption detection} holds with probability at least \[1 - \frac{4(1 - p \varphi)^k \|\vx^{(0)} - \vx^*\|^2}{(c^{(k)}_{\min})^2}. \]
\end{remark}

\section{Theoretical Results}\label{sec:theoretical results}

In this section, we provide proofs of our main results.  In Section~\ref{subsec:noisy RK}, we provide a convergence analysis for the randomized Kaczmarz method~\cite{SV09:Randomized-Kaczmarz} on a system perturbed by time-varying noise with entries sampled i.i.d.~ from a distriution with mean $\mu$ and standard deviation $s$.  This result will be utilized as a lemma in several of our other proofs, but may be of independent interest.  In Section~\ref{subsec:proof of main}, we prove our main result, Theorem~\ref{thm:steinerbergerQRKwNoise}, and in Section~\ref{subsec:proofs of corollaries}, we prove the corollaries of our main result, Corollary~\ref{cor:single_noisy_cor} and Corollary~\ref{cor:noisy corruption detection}.

\subsection{Convergence of RK on systems with time-varying noise}\label{subsec:noisy RK}

Our convergence analysis of QRK will hinge upon the approach taken in analyzing the randomized Kaczmarz method in the case that the system of interest is perturbed by time-varying noise.  This result, while useful as a lemma to us, is of independent interest.

\begin{theorem}\label{thm:noisyRK}
Let $A \in \mathbb{R}^{m \times n}$ with $m>n$ and $\vb \in \mathbb{R}^m$ define a consistent system $A\vx^* = \vb$ with unique solution $\vx^* \in \mathbb{R}^n$. Let $\vx^{(k)}$ denote the iterates of the randomized Kacmzarz method~\cite{SV09:Randomized-Kaczmarz} applied to the system defined by $A$ and $\vb^{(k)} = \vb + \vn^{(k)}$ in the $k$ iteration.

If $n_i^{(k)}$ are i.i.d samples from a distribution with mean $\mu$ and standard deviation $s$, then the error decreases at least linearly in expectation with
\[
\expect{\| \vx^{(k+1)}-\vx^*\|^2}{} \leq \phi^{k+1} \|\vx^{(0)}-\vx^*\|^2 + \frac{1-\phi^{k+1}}{1-\phi} \frac{m}{\|A\|^2_F}(s^2+\mu^2)
\]
where
\[
\phi = 1 - \frac{\sigma_{min}^2(A)}{\|A\|^2_F}.
\]
\end{theorem}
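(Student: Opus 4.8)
The plan is to establish a one-step contraction of the form $\EE\|\vx^{(j)}-\vx^*\|^2 \le \phi\,\EE\|\vx^{(j-1)}-\vx^*\|^2 + D$ for a constant $D$, and then unroll the resulting geometric recursion. First I would write the error update explicitly. Since the underlying system is consistent, $b_{i_j} = \langle \va_{i_j}, \vx^*\rangle$, so with $\vb^{(j)} = \vb + \vn^{(j)}$ the RK step~\eqref{eq:RKupdate} applied to $\vb^{(j)}$ gives
\[
\vx^{(j)} - \vx^* = \left(\vx^{(j-1)} - \vx^* - \frac{\langle \va_{i_j}, \vx^{(j-1)}-\vx^*\rangle}{\|\va_{i_j}\|^2}\,\va_{i_j}\right) + \frac{n_{i_j}^{(j)}}{\|\va_{i_j}\|^2}\,\va_{i_j}.
\]
The bracketed term is the orthogonal projection of the current error onto the hyperplane $\{\vy : \langle \va_{i_j}, \vy\rangle = 0\}$ and is therefore orthogonal to $\va_{i_j}$, whereas the remaining term is parallel to $\va_{i_j}$.

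The key observation is that these two pieces are orthogonal, so the cross term in the expansion of $\|\vx^{(j)}-\vx^*\|^2$ vanishes for every realization of $i_j$ and $\vn^{(j)}$, leaving the pointwise identity
\[
\|\vx^{(j)}-\vx^*\|^2 = \|\vx^{(j-1)}-\vx^*\|^2 - \frac{\langle \va_{i_j}, \vx^{(j-1)}-\vx^*\rangle^2}{\|\va_{i_j}\|^2} + \frac{(n_{i_j}^{(j)})^2}{\|\va_{i_j}\|^2}.
\]
I would then condition on $\vx^{(j-1)}$ and take expectation over the fresh randomness $(i_j, \vn^{(j)})$, which is independent of the history $\vx^{(j-1)}$. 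Under the RK sampling rule $i_j = l$ with probability $\|\va_l\|^2/\|A\|_F^2$, the middle term averages to $\|A(\vx^{(j-1)}-\vx^*)\|^2/\|A\|_F^2 \ge (\sigma_{\min}^2(A)/\|A\|_F^2)\,\|\vx^{(j-1)}-\vx^*\|^2$, producing the contraction factor $\phi = 1 - \sigma_{\min}^2(A)/\|A\|_F^2$.

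For the noise term I would use that $\vn^{(j)}$ is independent of $i_j$ and that every entry shares the same second moment $\EE[(n_l^{(j)})^2] = \sigma^2 + \mu^2$; the sampling weights $\|\va_l\|^2$ cancel against the denominators, so the expected noise contribution is exactly $\sum_{l=1}^{m}(\sigma^2+\mu^2)/\|A\|_F^2 = m(\sigma^2+\mu^2)/\|A\|_F^2 =: D$. Combining these gives the one-step bound $\EE\|\vx^{(j)}-\vx^*\|^2 \le \phi\,\EE\|\vx^{(j-1)}-\vx^*\|^2 + D$. Unrolling from $j=1$ to $j=k+1$ and summing the geometric series $D\sum_{j=0}^{k}\phi^{j} = D(1-\phi^{k+1})/(1-\phi)$ yields the claimed estimate. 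I do not expect a serious obstacle here; the only points requiring care are verifying that the cross term genuinely vanishes (which relies on the noise entering along $\va_{i_j}$, orthogonal to the projected error) and justifying that the freshly drawn noise $\vn^{(j)}$ is independent of both $i_j$ and $\vx^{(j-1)}$, so that the conditional expectations factor and each entry's second moment contributes equally.
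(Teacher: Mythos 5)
Your proposal is correct and is in substance the same argument as the paper's: the pointwise identity you derive, $\|\vx^{(j)}-\vx^*\|^2 = \|\vx^{(j-1)}-\vx^*\|^2 - \langle \va_{i_j}, \vx^{(j-1)}-\vx^*\rangle^2/\|\va_{i_j}\|^2 + (n_{i_j}^{(j)})^2/\|\va_{i_j}\|^2$, is exactly what the paper obtains by coupling the noisy iterate with an auxiliary noiseless iterate $\hat{\vx}^{(k+1)}$ and applying the Pythagorean theorem twice, so your orthogonal decomposition is the same fact stated algebraically rather than geometrically. The remaining steps --- expectation over $i_j$ under the $\|\va_l\|^2/\|A\|_F^2$ sampling to get the contraction factor $\phi$, expectation over the independent noise to get $m(\sigma^2+\mu^2)/\|A\|_F^2$, and unrolling the geometric recursion --- coincide with the paper's proof.
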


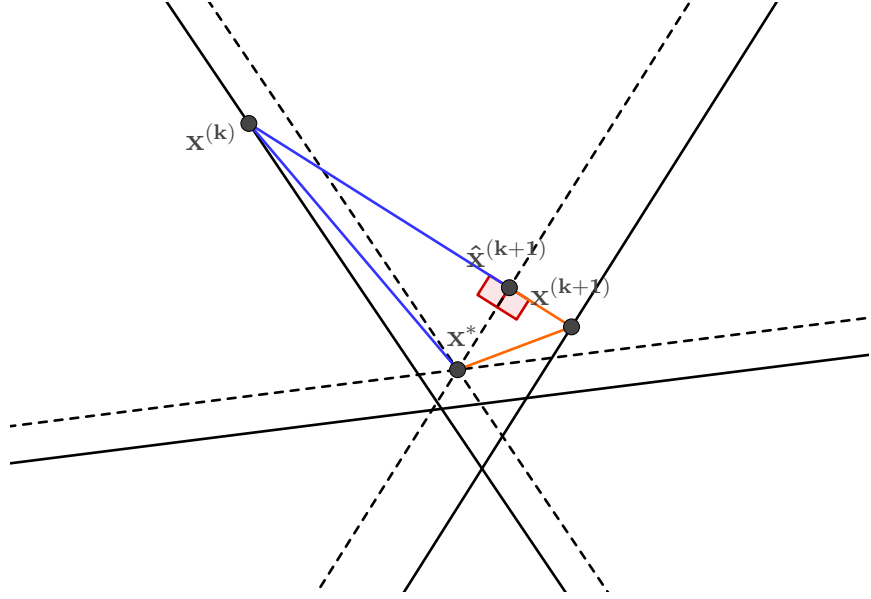
\begin{figure}
    \begin{center}
    \definecolor{ttttff}{rgb}{0.2,0.2,1}
\definecolor{ccqqqq}{rgb}{0.8,0,0}
\definecolor{ffwwqq}{rgb}{1,0.4,0}
\definecolor{uuuuuu}{rgb}{0.26666666666666666,0.26666666666666666,0.26666666666666666}
\begin{tikzpicture}[line cap=round,line join=round,>=triangle 45,x=1cm,y=1cm,scale=1.5]
\clip(-3.871828266237992,-1.335625506296871) rectangle (3.735920479096977,3.889656027791738);

\draw[line width=1pt,color=ccqqqq,fill=ccqqqq,fill opacity=0.1] (0.38059938088840994,1.4694288381386829) -- (0.27151820922573516,1.296539757858026) -- (0.4444072895063919,1.1874585861953513) -- (0.5534884611690667,1.3603476664760081) -- cycle;
\draw[line width=1pt,color=ccqqqq,fill=ccqqqq,fill opacity=0.10000000149011612] (0.4444072895063919,1.1874585861953513) -- (0.6172963697870487,1.0783774145326765) -- (0.7263775414497234,1.2512664948133334) -- (0.5534884611690667,1.3603476664760081) -- cycle;

\draw [dashed, line width=1pt, domain=-4.871828266237992:4.735920479096977] plot(\x,{(--1.6095084563946158-3.086293567338196*\x)/2.08324192165157});
\draw [dashed, line width=1pt, domain=-4.871828266237992:4.735920479096977] plot(\x,{(-0.9265458184069523-3.0398707143766526*\x)/-1.917950275916625});
\draw [dashed, line width=1pt, domain=-4.871828266237992:4.735920479096977] plot(\x,{(--1.8037767505481548--0.3672707237751681*\x)/2.905588921873659});
\draw [line width=1pt,domain=-4.871828266237992:4.735920479096977]  plot(\x,{(--0.4506289062066884-3.086293567338196*\x)/2.08324192165157});
\draw [line width=1pt,domain=-4.871828266237992:4.735920479096977] plot(\x,{(--1.4134014617303752-3.0398707143766526*\x)/-1.917950275916625});

\draw [line width=1pt,color=ffwwqq] (1.1040686947047949,1.0129692382323496)-- (0.09441107812634106,0.6327292762248319);
\draw [line width=1pt,color=ttttff] (-1.755619208012084,2.8172364973996387)-- (0.09441107812634106,0.6327292762248319);
\draw [line width=1pt,color=ttttff] (-1.755619208012084,2.8172364973996387)-- (0.5534884611690667,1.3603476664760081);
\draw [line width=1pt,color=ffwwqq] (0.5534884611690667,1.3603476664760081)-- (1.1040686947047949,1.0129692382323496);
\draw [line width=1pt,domain=-4.871828266237992:4.735920479096977] plot(\x,{(.8482283028167619--0.3672707237751681*\x)/2.905588921873659});
\begin{scriptsize}
\draw [fill=uuuuuu] (0.09441107812634106,0.6327292762248319) circle (2pt);
\draw[color=uuuuuu] (0.13741756254922948,0.9360201296521732) node {\Large $\bm{x}^*$};
\draw[color=black] (-2.3084770463191338,3.7691977937729955);
\draw [fill=uuuuuu] (1.1040686947047949,1.0129692382323496) circle (2pt);
\draw[color=uuuuuu] (1.102900141091679,1.3311231372336485) node {\Large $\bm{x^{(k+1)}}$};
\draw [fill=uuuuuu] (0.5534884611690667,1.3603476664760081) circle (2pt);
\draw[color=uuuuuu] (0.5367071096746993,1.6684061924861273) node {\Large $\bm{\hat{x}^{(k+1)}}$};
\draw [fill=uuuuuu] (-1.755619208012084,2.8172364973996387) circle (2pt);
\draw[color=uuuuuu] (-2.08555709376905,2.6995286756865626) node {\Large $\bm{x^{(k)}}$};
\end{scriptsize}
\end{tikzpicture}
    \end{center}
    \caption{An illustration of the errors of the iterates $\vx^{(k+1)}$ and $\hat{\vx}^{(k+1)}$ produced from $\vx^{(k)}$ on the noisy and noiseless systems, respectively.}
    \label{fig:kacz_diagram}
\end{figure}

\begin{proof}
We apply here a proof technique inspired by~\cite{Nee10:Randomized-Kaczmarz}.  Consider two instances of the randomized Kaczmarz method producing iterates $\{\vx^{(k)}\}$ and $\{\hat{\vx}^{(k)}\}$. One sequence of iterates, $\{\vx^{(k)}\}$, is generated using the time varying inconsistent system defined by $A$ and $\vb^{(k)} = \vb + \vn^{(k)}$, and the other, $\{\hat{\vx}^{(k)}\}$ using the unobserved noiseless system defined by $A$ and $\vb$, both with the same sequence of row selections $\{i_k\}$. As illustrated in the diagram in Figure~\ref{fig:kacz_diagram}, the Pythagorean theorem and the fact that the randomized Kaczmarz method updates via orthogonal projection implies
\begin{equation}
\|\vx^{(k)}-\vx^*\|^2 = \|\vx^{(k)}-\hat{\vx}^{(k+1)}\|^2+\|\hat{\vx}_{(k+1)}-\vx^*\|^2, \label{triangle1}
\end{equation}
and
\begin{equation}
\|\vx^{(k+1)}-\vx^*\|^2 = \|\hat{\vx}^{(k+1)}-\vx^*\|^2 + \|\hat{\vx}^{(k+1)}-\vx^{(k+1)}\|^2. \label{triangle2}
\end{equation}

We now observe that the quantity $\|\hat{\vx}^{(k+1)}-\vx^{(k+1)}\|$ represents the difference between a non-noisy and noisy iteration of Kaczmarz, where both use $\vx^{(k)}$ as the previous iterate.  Then
\begin{eqnarray*}
{\|\hat{\vx}^{(k+1)}-\vx^{(k+1)}\|} &=& \left\|\vx^{(k)}+\frac{b_{i_k} - \va_{i_{k}}^T \vx^{(k)}}{\|\va_{i_{k}}\|^2} \va_{i_k} - \left(\vx^{(k)}+\frac{b_{i_k} + n^{(k)}_{i_k} - \va_{i_{k}}^T \vx^{(k)}}{\|\va_{i_{k}}\|^2} \va_{i_k}\right)\right\|
= \frac{|n^{(k)}_{i_k}|}{\|\va_{i_{k}}\|}.
\end{eqnarray*}
Meanwhile, the quantity $\|\vx^{(k)}-\hat{\vx}^{(k+1)}\|$ is
\begin{eqnarray*}
{\|\vx^{(k)}-\hat{\vx}^{(k+1)}\|} &=& \left\|\vx^{(k)}-\left(\vx^{(k)}+\frac{b_{i_k} - \va_{i_{k}}^T \vx^{(k)}}{\|\va_{i_{k}}\|^2}\va_{i_k}\right)\right\|
= \frac{|b_{i_k} - \va_{i_{k}}^T \vx^{(k)}|}{\|\va_{i_{k}}\|}.
\end{eqnarray*}

Substituting, our equations~\eqref{triangle1} and~\eqref{triangle2} become
\begin{equation}
\|\vx^{(k)}-\vx^*\|^2 = \frac{(b_{i_k} - \va_{i_{k}}^T \vx^{(k)})^2}{\|\va_{i_{k}}\|^2} + \|\hat{\vx}^{(k+1)}-\vx^*\|^2, \label{eq:diagram_eq1}
\end{equation}
and
\begin{equation}
\|\vx^{(k+1)}-\vx^*\|^2 = \|\hat{\vx}^{(k+1)}-\vx^*\|^2 + \frac{(n^{(k)}_{i_k})^2}{\|\va_{i_{k}}\|^2}, \label{eq:diagram_eq2}
\end{equation}
respectively.
The difference of \eqref{eq:diagram_eq1} and \eqref{eq:diagram_eq2} yields
\[
\|\vx^{(k)}-\vx^*\|^2 - \|\vx^{(k+1)}-\vx^*\|^2 = \frac{(b_{i_k} - \va_{i_{k}}^T \vx^{(k)})^2}{\|\va_{i_{k}}\|^2} - \frac{(n^{(k)}_{i_k})^2}{\|\va_{i_{k}}\|^2}.
\]
Now, taking the expected value of both sides with respect to row choice $i_k$, we obtain
\begin{eqnarray*}
\|\vx^{(k)}-\vx^*\|^2-\expect{\|\vx^{(k+1)}-\vx^*\|^2}{i_k} &=&  \expect{\frac{(b_{i_k} - \va_{i_{k}}^T \vx^{(k)})^2}{\|\va_{i_{k}}\|^2}}{i_k} - \expect{\frac{(n^{(k)}_{i_k})^2}{\|\va_{i_{k}}\|^2}}{i_k} \\
{} &=&  \sum_{i=1}^m \frac{(b_i - \va_i^T \vx^{(k)})^2}{\|\va_i\|^2}\frac{\|\va_i\|^2}{\|A\|^2_F} - \sum_{i=1}^m \frac{(n^{(k)}_i)^2}{\|\va_i\|^2}\frac{\|\va_i\|^2}{\|A\|^2_F} \\
{} &=& \frac{\|A\vx^{(k)}-\vb\|^2}{\|A\|^2_F} - \frac{\lVert \vn^{(k)} \rVert_2^2}{\|A\|^2_F} \\
{} &\geq& \frac{\sigma_{min}^2(A)}{\|A\|^2_F} \|\vx^{(k)}-\vx^*\|^2 - \frac{\lVert \vn^{(k)} \rVert_2^2}{\|A\|^2_F},
\end{eqnarray*}
where the second equality applies the row sampling distribution of~\cite{SV09:Randomized-Kaczmarz}, and the inequality follows from residual bounds and the fact that $A$ is full column-rank.  This yields
\begin{eqnarray}
{\expect{\|\vx^{(k+1)}-\vx^*\|^2}{i_k}}
&\leq& \frac{\lVert \vn^{(k)} \rVert_2^2}{\|A\|^2_F} + \left(1 - \frac{\sigma_{min}^2(A)}{\|A\|^2_F}\right) \|\vx^{(k)}-\vx^*\|^2 \\
{} &=& \frac{\lVert \vn^{(k)} \rVert_2^2}{\|A\|^2_F} + \phi \|\vx^{(k)}-\vx^*\|^2. \label{eq:thm2.5line}
\end{eqnarray}

Now, we wish to simplify this recursive expression. We can take the expectation with respect to the noise at the $k$th iteration, $\bm{n^{(k)}}$. We have
\[
\expect{\frac{\lVert \vn^{(k)} \rVert_2^2}{\|A\|^2_F}}{\vn^{(k)}} = \expect{\sum_{i=1}^m \frac{(n^{(k)}_i)^2}{\|A\|^2_F}}{\vn^{(k)}}
= \frac{1}{\|A\|^2_F} \sum_{i=1}^m (s^2+\mu^2) = \frac{m}{\|A\|^2_F}(s^2+\mu^2),
\]
where the second equation follows from the second moment formula.
Therefore we have
\begin{eqnarray*}
{{\expect{\|\vx^{(k+1)}-\vx^*\|^2}{i_k, \vn^{(k)}}}}
&\leq& \frac{m}{\|A\|^2_F}(s^2+\mu^2) + \phi \|\vx^{(k)}-\vx^*\|^2.
\end{eqnarray*}

Repeating this process inductively, we have
\begin{eqnarray*}
    \expect{\| \vx^{(k+1)}-\vx^*\|^2}{} &\leq& {(1+\phi+\phi^2+...+\phi^{k})\frac{m}{\|A\|^2_F}(s^2+\mu^2) + \phi^{k+1} \|\vx_{0}-\vx^*\|^2}\\
    {} &\leq& {\frac{1-\phi^{k+1}}{1-\phi} \frac{m}{\|A\|^2_F}(s^2+\mu^2) + \phi^{k+1} \|\vx_{0}-\vx^*\|^2}.
\end{eqnarray*}
The stated result follows from the independence of all random variables $\{\vn^{(j)}\}_{j=1}^k$ and $\{i_j\}_{j=1}^k$.

\end{proof}

\subsection{Proof of Theorem~\ref{thm:steinerbergerQRKwNoise}}\label{subsec:proof of main}

We now prove our main result Theorem~\ref{thm:steinerbergerQRKwNoise}.  We begin with a lemma which bounds the $q$-quantile of the noisy and corrupted residual by a fraction of the norm of the error and the norm of the noise.

\begin{lemma}\label{lem:quantilebound}
Let $0<q<1-\beta$, let $\vv\in\mathbb{R}^{n}$ be arbitrary, and assume the setup described in Section~\ref{subsec:problem}. Then
\[ q-\text{\normalfont{quant}}\left(\left\{|\langle \vv,\va_i\rangle - b_{i}^{(k)}|\right\}_{i=1}^{m}\right)\leq\frac{1}{m(1-q-\beta)}\bigg(\sqrt{m(1-\beta)}\sigma_{\text{\normalfont{max}}}(A)\lVert \vv - \vx^* \rVert + \|\vn^{(k)}\|_1 \bigg). \]
\end{lemma}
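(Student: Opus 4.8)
The plan is to prove the bound by a threshold-counting argument: setting $t$ equal to the claimed right-hand side, I will show that at most $m-\lfloor qm\rfloor$ of the residual magnitudes $r_i := |\langle \vv,\va_i\rangle - b_i^{(k)}|$ exceed $t$, which by the definition of the empirical $q$-quantile as the $\lfloor qm\rfloor$-th smallest entry forces $\qquant{\{r_i\}_{i=1}^m}\le t$. The first step is to exploit consistency of the noiseless system: since $A\vx^*=\vb$ and $\vb^{(k)}=\vb+\vn^{(k)}+\vc^{(k)}$, each entry factors as
\[
\langle \vv,\va_i\rangle - b_i^{(k)} = \langle \va_i,\,\vv-\vx^*\rangle - n_i^{(k)} - c_i^{(k)}.
\]
On the uncorrupted indices $i\notin C_k$ this equals $|\langle \va_i,\vv-\vx^*\rangle - n_i^{(k)}|$, whereas on the at most $\beta m$ corrupted indices of $C_k$ the value can be arbitrarily large and will simply be discarded.

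Next I would split the count $\#\{i : r_i > t\}$ over $C_k$ and its complement $G := [m]\setminus C_k$. The corrupted indices contribute at most $|C_k|\le\beta m$ to this count. For the uncorrupted indices I would apply a Markov-type bound,
\[
\#\{i\in G : r_i > t\} \le \frac{1}{t}\sum_{i\in G} r_i,
\]
and control the sum by the triangle inequality, $\sum_{i\in G} r_i \le \sum_{i\in G}|\langle \va_i,\vv-\vx^*\rangle| + \sum_{i\in G}|n_i^{(k)}|$. The noise piece is at most $\|\vn^{(k)}\|_1$. For the signal piece I would use the norm comparison $\|A_G(\vv-\vx^*)\|_1\le\sqrt{|G|}\,\|A_G(\vv-\vx^*)\|$ together with the elementary fact that deleting rows cannot increase the operator norm, i.e.\ $\|A_G\|\le\sigma_{\max}(A)$, obtaining $\sum_{i\in G}|\langle \va_i,\vv-\vx^*\rangle| \le \sqrt{|G|}\,\sigma_{\max}(A)\,\|\vv-\vx^*\|$.

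Combining the pieces gives
\[
\#\{i:r_i>t\}\le |C_k| + \frac{\sqrt{|G|}\,\sigma_{\max}(A)\|\vv-\vx^*\| + \|\vn^{(k)}\|_1}{t},
\]
and since $m-\lfloor qm\rfloor\ge(1-q)m$, it suffices to show the right-hand side is at most $(1-q)m$ for every admissible corruption count $|C_k|\le\lfloor\beta m\rfloor$ with $|G|=m-|C_k|$. I expect the main obstacle to be exactly this bookkeeping: although the signal sum is worst (largest $|G|$) when there are no corruptions, the budget term $|C_k|$ is then smallest, so one must check that the two effects balance and that the net extreme case is $|C_k|=\lfloor\beta m\rfloor$, where $|G|\approx m(1-\beta)$ and the inequality is met with equality by the proposed $t$. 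A short derivative check on the map $c\mapsto\big(\sqrt{m-c}\,\sigma_{\max}(A)\|\vv-\vx^*\|+\|\vn^{(k)}\|_1\big)/\big((1-q)m-c\big)$ confirms it is nondecreasing on $[0,\beta m]$, so its maximum sits at $c=\lfloor\beta m\rfloor$; this is precisely what pins down the constants $\sqrt{m(1-\beta)}$ and $1/(m(1-q-\beta))$ appearing in the statement.
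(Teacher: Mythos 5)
Your proof is correct, and although it is assembled from the same two ingredients as the paper's proof --- the bound $\sum_{i \in G} r_i \le \sqrt{|G|}\,\sigma_{\max}(A)\,\|\vv - \vx^*\| + \|\vn^{(k)}\|_1$ on the uncorrupted residual sum, where $r_i = |\langle \vv, \va_i\rangle - b_i^{(k)}|$ and $G = [m]\setminus C_k$ (triangle inequality, $\ell_1$--$\ell_2$ comparison, and the fact that deleting rows cannot increase the operator norm), plus a counting step relating the quantile to that sum --- it runs the counting in the opposite direction and is genuinely more careful on one point. The paper takes $\alpha$ to be the $q$-quantile itself, notes that at least $(1-q)m - \beta m$ of the residuals that are at least $\alpha$ lie on uncorrupted rows, deduces $m(1-q-\beta)\,\alpha \le \sum_{i \notin C_k} r_i$, and divides; you instead fix the target threshold $t$ and count exceeders by a Markov bound. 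These are dual formulations of the same estimate, and the paper's direction is shorter. The substantive difference is your treatment of $c = |C_k|$: the paper substitutes the worst case $c = \beta m$ into \emph{both} the exceeder count and the $\ell_1$--$\ell_2$ dimension factor, writing $\sum_{i\notin C_k}|\langle \va_i, \vv - \vx^*\rangle| \le \sqrt{m(1-\beta)}\,\sigma_{\max}(A)\,\|\vv-\vx^*\|$; but when $|C_k| < \beta m$ this sum has $m - |C_k| > m(1-\beta)$ terms, so the $\ell_1$--$\ell_2$ step actually yields the larger factor $\sqrt{m - |C_k|}$, and plugging in $c = \beta m$ there is a favorable substitution rather than a conservative one --- it requires justification (for instance by padding $C_k$ with dummy zero corruptions up to size $\beta m$), which the paper does not supply. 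Your derivative check that $c \mapsto \bigl(\sqrt{m-c}\,\sigma_{\max}(A)\|\vv-\vx^*\| + \|\vn^{(k)}\|_1\bigr)/\bigl((1-q)m - c\bigr)$ is nondecreasing on $[0,\beta m]$ is precisely the bookkeeping that closes this gap: it shows that the looser residual-sum bound at small $c$ is outweighed by the larger budget of allowed exceeders, so $c = \beta m$ is genuinely extremal and the constants $\sqrt{m(1-\beta)}$ and $1/(m(1-q-\beta))$ are valid for every admissible $|C_k| \le \beta m$. The only caveat worth adding to your write-up is the degenerate case $t = 0$ (possible only when $\vv = \vx^*$ and $\vn^{(k)} = \vzero$), where the Markov division is unavailable but the claim is immediate, since then only the at most $\beta m < m - \lfloor qm \rfloor$ corrupted residuals can be nonzero.
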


\begin{proof}  We recall that the set of corrupted equations in the $k$th iteration is defined as $C_{k} := \text{supp}(\vc^{(k)}) \subset \{ 1,2,\dots,m \}.$ We begin by considering the rows where there exist no corruption in iteration $k$, that is, for $i \not\in C_{k}$ where
\[\langle \va_i, \vx^* \rangle = b_{i} = b_{i}^{(k)} - n_{i}^{(k)}. \]

\noindent Hence, we have
\begin{eqnarray*}
{| \langle \va_i, \vv \rangle - \langle \va_i, \vx^* \rangle| } = |\langle \va_i, \vv \rangle - b_{i}^{(k)} + n_{i}^{(k)}| \geq |\langle \va_i, \vv \rangle - b_{i}^{(k)}| - |n_{i}^{(k)}|.
\end{eqnarray*}
\noindent This is true for each $i\notin C_{k}$, thus if we take the sum over all these elements we have
\[\sum\limits_{\substack{i=1 \\ i\notin C_{k}}}^{m}| \langle \va_i, \vv \rangle - \langle \va_i, \vx^* \rangle| \geq \sum\limits_{\substack{i=1 \\ i\notin C_{k}}}^{m}|\langle \va_i, \vv \rangle - b_{i}^{(k)}| - \sum\limits_{\substack{i=1 \\ i\notin C_{k}}}^{m}|n_{i}^{(k)}|.\]
\noindent Let us now consider the sum
\[ \sum\limits_{\substack{i=1 \\ i\notin C_{k}}}^{m}|\langle \va_i,\vv\rangle - b_{i}|^2 = \lVert A_{\notin C_{k}}\vv - \vb_{\notin C_{k}} \rVert^2 = \lVert A_{\notin C_{k}}\vv - A_{\notin C_{k}}\vx^* \rVert^2. \]
This can be bounded from above by
\begin{eqnarray*}
{\lVert A_{\notin C_{k}}\vv - A_{\notin C_{k}}\vx^* \rVert^2} \leq \lVert A_{\notin C_{k}} \rVert^2 \dot \lVert \vv - \vx^* \rVert^2 \leq  \lVert A \rVert^2 \dot \lVert \vv - \vx^* \rVert^2 = \sigma_{\text{\normalfont{max}}}(A)^{2}\lVert \vv - \vx^* \rVert^2.
\end{eqnarray*}
Thus we now have that
\[ \sum\limits_{\substack{i=1 \\ i\notin C_{k}}}^{m}|\langle \va_i,\vv\rangle - b_{i}|^2 \leq \sigma_{\text{\normalfont{max}}}^{2}(A)\lVert \vv - \vx^* \rVert^2.\]
Using the $\ell_1 - \ell_2$ norm inequality,
we get
\[\sum\limits_{\substack{i=1 \\ i\notin C_{k}}}^{m}| \langle \va_i, \vv \rangle - \langle \va_i, \vx^* \rangle|  \leq \sqrt{m(1-\beta)} \sigma_{\text{\normalfont{max}}}(A)\lVert \vv - \vx^* \rVert,\]
and hence,
\[\sum\limits_{\substack{i=1 \\ i\notin C_{k}}}^{m}|\langle \va_i, \vv \rangle - b_{i}^{(k)}| - \sum\limits_{\substack{i=1 \\ i\notin C_{k}}}^{m}|n_{i}^{(k)}| \leq \sqrt{m(1-\beta)}\sigma_{\text{\normalfont{max}}}(A)\lVert \vv - \vx^* \rVert.\]
We can rearrange this to arrive at
\begin{eqnarray*}
{\sum\limits_{\substack{i=1 \\ i\notin C_{k}}}^{m}|\langle \va_i, \vv \rangle - b_{i}^{(k)}|} &\leq& \sqrt{m(1-\beta)}\sigma_{\text{\normalfont{max}}}(A)\lVert \vv - \vx^* \rVert + \sum\limits_{\substack{i=1 \\ i\notin C_{k}}}^{m}|n_{i}^{(k)}|\\
{} &\leq&  \sqrt{m(1-\beta)}\sigma_{\text{\normalfont{max}}}(A)\lVert \vv - \vx^* \rVert + \|\vn^{(k)}\|_1.
\end{eqnarray*}

Let $\alpha = q-\text{quant}(\{|\langle \vv,\va_i\rangle - b_{i}^{(k)}|\}_{i=1}^{m})$.
Then at least $(1-q)m$ of the $m$ values $$\left\{\left|\langle \vv, \va_i\rangle - b_{i}^{(k)}\right|\right\}_{i=1}^{m}$$ are at least $\alpha$ and at least $(1-q)m - \beta m$ belong to equations that have not been corrupted. Then
\[ m(1-q-\beta)\alpha\leq\sum_{i\notin C_{k}}|\langle \va_i,\vv \rangle - b_{i}^{(k)}|\leq\sqrt{m(1-\beta)}\sigma_{\text{\normalfont{max}}}(A)\lVert \vv - \vx^* \rVert + \|\vn^{(k)}\|_1. \]
and therefore
\[ q-\text{\normalfont{quant}}\left(\left\{\left|\langle \vv,\va_i\rangle - b_{i}^{(k)}\right|\right\}_{i=1}^{m}\right)\leq\frac{\sqrt{m(1-\beta)}\sigma_{\text{\normalfont{max}}}(A)}{m(1-q-\beta)}\lVert \vv - \vx^* \rVert + \frac{1}{m(1-q-\beta)}\|\vn^{(k)}\|_1. \]
\end{proof}

 Let us now introduce the subset $S_{k} \subset C_{k}$ of corrupted equations which is defined as
\[  S_{k} = \{i \in C_{k}:|\langle \vx^{(k)},\va_i \rangle - b_{i}^{(k)}|\leq q-\text{\normalfont{quant}}(\{|\langle \vx^{(k)},\va_j\rangle - b_{j}^{(k)}|\}_{j=1}^{m})\}.\]
$S_{k}$ is the set of all corrupted equations which end up in the set of $q m$ equations that are being considered for projection by QRK. If $S_{k} = \emptyset$, then when computing $\vx^{(k+1)}$ we can be certain that we are projecting onto an uncorrupted row and thus we would not need to consider the lemma that will follow shortly. Let us then assume, for this lemma, that $|S_{k}|\geq 1$. Also note that $|S_{k}|\leq|C_{k}|\leq|\beta m|$.

\begin{lemma}\label{lem:corruptProjection}
Let $\vx^{(k)}$ denote the iterates of Algorithm~\ref{QuantileRK1} or Algorithm~\ref{QuantileRK2} applied with quantile $q$ to the system defined by $A$ and $ \vb^{(k)} = \vb + \vn^{(k)} + \vc^{(k)}$, in the $k$th iteration. Assuming the setup described in Section~\ref{subsec:problem}, Assumptions~\ref{ass:unit_rows} and~\ref{ass:pos_rate}, $\beta<q<1-\beta$ arbitrary, and $|S_{k}| \geq 1$, then we have
\begin{eqnarray*}
\mathbb{E}_{i\in S_{k}}\rVert \vx^{(k+1)}-\vx^*\lVert^2&\leq&\Bigg(1+\frac{2}{\sqrt{|S_{k}|}}\frac{\sqrt{m(1-\beta)}\sigma_{\text{\normalfont{max}}}^2(A)}{m(1-q-\beta)} +\frac{m(1-\beta)\sigma_{\text{\normalfont{max}}}^2(A)}{m^2 (1-q-\beta)^2}\Bigg)\lVert \vx^{(k)}-\vx^*\rVert^2\\
{} && \quad+ \Bigg(\frac{2}{\sqrt{|S_{k}|}}\frac{\sigma_{\text{\normalfont{max}}}(A)}{m(1-q-\beta)} +\frac{2\sqrt{m(1-\beta)}\sigma_{\text{\normalfont{max}}}(A)}{m^2 (1-q-\beta)^2}\Bigg)\lVert \vx^{(k)}-\vx^*\rVert \|\vn^{(k)}\|_1\\
{} && \quad+  \frac{\|\vn^{(k)}\|_1^2}{m^2(1-q-\beta)^2}.
\end{eqnarray*}
\end{lemma}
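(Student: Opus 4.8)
The plan is to start from the quantile Kaczmarz update and expand the squared error directly. Since $i \in S_k$ means the update is actually taken, write $\vx^{(k+1)} = \vx^{(k)} - d_i \va_i$ with $d_i := \langle \va_i,\vx^{(k)}\rangle - b_i^{(k)}$, where Assumption~\ref{ass:unit_rows} has been used to drop the $\|\va_i\|^2 = 1$ denominators. Setting $\vx^{(k+1)}-\vx^* = (\vx^{(k)}-\vx^*) - d_i\va_i$ and expanding the square gives the exact identity $\|\vx^{(k+1)}-\vx^*\|^2 = \|\vx^{(k)}-\vx^*\|^2 - 2 d_i\langle \va_i, \vx^{(k)}-\vx^*\rangle + d_i^2$, again using $\|\va_i\|=1$. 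This reduces everything to controlling $|d_i|$ and the inner product $\langle \va_i,\vx^{(k)}-\vx^*\rangle$.

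The conceptual heart of the argument, and the step I expect to be the main obstacle, is that although $i \in S_k$ indexes a \emph{corrupted} row (so $d_i$ secretly contains the unknown corruption $c_i^{(k)}$ and noise $n_i^{(k)}$), membership in $S_k$ forces $|d_i|$ to be at most the empirical quantile $\alpha := q\text{-}\mathrm{quant}\big(\{|\langle \vx^{(k)},\va_j\rangle - b_j^{(k)}|\}_{j=1}^m\big)$, by the very definition of $S_k$. This is precisely the mechanism by which QRK tames otherwise arbitrary corruptions: we never need to know $c_i^{(k)}$, only that projecting onto a row inside the quantile set cannot move us far. Bounding $-2 d_i\langle \va_i,\vx^{(k)}-\vx^*\rangle \le 2\alpha|\langle \va_i,\vx^{(k)}-\vx^*\rangle|$ and $d_i^2 \le \alpha^2$ then yields the pointwise estimate
\[
\|\vx^{(k+1)}-\vx^*\|^2 \le \|\vx^{(k)}-\vx^*\|^2 + 2\alpha\,|\langle \va_i,\vx^{(k)}-\vx^*\rangle| + \alpha^2 .
\]

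Finally I would take the conditional expectation over $i$ sampled uniformly from $S_k$ (the conditional law of $i_k$ given $i_k \in S_k$ is uniform for both Algorithm~\ref{QuantileRK1} and Algorithm~\ref{QuantileRK2}). The only nonconstant term is $\mathbb{E}_{i\in S_k}|\langle \va_i,\vx^{(k)}-\vx^*\rangle| = \tfrac{1}{|S_k|}\sum_{i\in S_k}|\langle \va_i,\vx^{(k)}-\vx^*\rangle|$, which I bound by Cauchy--Schwarz ($\ell_1$--$\ell_2$) as $\tfrac{1}{\sqrt{|S_k|}}\|A_{S_k}(\vx^{(k)}-\vx^*)\| \le \tfrac{\sigma_{\max}(A)}{\sqrt{|S_k|}}\|\vx^{(k)}-\vx^*\|$, using that the operator norm of a row-submatrix is at most $\sigma_{\max}(A)$. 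Substituting the upper bound on $\alpha$ from Lemma~\ref{lem:quantilebound}, namely $\alpha \le \tfrac{\sqrt{m(1-\beta)}\sigma_{\max}(A)}{m(1-q-\beta)}\|\vx^{(k)}-\vx^*\| + \tfrac{1}{m(1-q-\beta)}\|\vn^{(k)}\|_1$, and expanding the resulting products of $\alpha$ with $\|\vx^{(k)}-\vx^*\|$ and the $\alpha^2$ term, the coefficients collect exactly into the $\|\vx^{(k)}-\vx^*\|^2$, $\|\vx^{(k)}-\vx^*\|\,\|\vn^{(k)}\|_1$, and $\|\vn^{(k)}\|_1^2$ groupings in the statement. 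Beyond the quantile trick, the remaining difficulty is purely the careful bookkeeping of these cross terms, so I would organize the expansion by powers of $\|\vx^{(k)}-\vx^*\|$ to match the three displayed lines.
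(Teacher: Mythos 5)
Your proposal is correct and follows essentially the same route as the paper's proof: expand the squared error of the projection step, use membership in $S_k$ to bound the residual entry by the $q$-quantile, control the cross term via Cauchy--Schwarz together with $\|A_{S_k}(\vx^{(k)}-\vx^*)\| \le \sigma_{\max}(A)\|\vx^{(k)}-\vx^*\|$, and substitute the quantile bound from Lemma~\ref{lem:quantilebound}. The only cosmetic difference is that you bound $|d_i|\le\alpha$ pointwise before applying the $\ell_1$--$\ell_2$ inequality to the inner products, whereas the paper applies Cauchy--Schwarz to the pair of sums jointly and only then invokes the quantile bound; both yield the identical factor $\tfrac{2\alpha\,\sigma_{\max}(A)\|\vx^{(k)}-\vx^*\|}{\sqrt{|S_k|}}$ and hence the same three coefficient groupings.
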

\begin{proof}
For any arbitrary vector $\vv\in \mathbb{R}^{n}$,
\[ \lVert \vx^{(k)}+\vv-\vx^*\rVert^2=\lVert \vx^{(k)}-\vx^*\rVert^2 +2\langle \vx^{(k)}-\vx^*,\vv\rangle +\lVert \vv\rVert^2\]
and we will apply this to the special choice
\[ \vv=(b_{i}^{(k)}-\langle \vx^{(k)},\va_i\rangle) \va_i \text{ \normalfont{ where } } i\in S_k.\]
We first observe that, for $i\in S_{k}$, the term $\lVert \vv \rVert^2$ is uniformly small since
\begin{eqnarray*}
{\lVert \vv \rVert^2} &=& \lVert (b_{i}^{(k)}-\langle \vx^{(k)},\va_i\rangle) \va_i \rVert^2 = |b_{i}^{(k)}-\langle \vx^{(k)},\va_i\rangle |^2 \\
{} &\leq&  q-\text{\normalfont{quant}}(\{|\langle \vx^{(k)},\va_j\rangle - b_{j}^{(k)}|^2\}_{j=1}^{m})^2\\
{} &\leq& \frac{1}{m^2(1-q-\beta)^2}\bigg(\sqrt{m(1-\beta)}\sigma_{\text{\normalfont{max}}}(A)\lVert \vx^{(k)} - \vx^* \rVert + \|\vn^{(k)}\|_1\bigg)^2,
\end{eqnarray*}
where the second equation follows from Assumption~\ref{ass:unit_rows} and the last inequality follows from Lemma~\ref{lem:quantilebound}.
It remains to bound $\mathbb{E}_{i\in S_{k}}2\langle \vx^{(k)} - \vx^*,\vv\rangle$. We begin by showing
\begin{eqnarray*}
\mathbb{E}_{i\in S_{k}}2\langle \vx^{(k)}-\vx^*,\vv\rangle &=& \frac{2}{|S_{k}|}\sum_{i\in S_{k}}\langle \vx^{(k)}-\vx^*,(b_{i}^{(k)}-\langle \vx^{(k)},\va_i\rangle)\va_i\rangle \\
{} &=& \frac{2}{|S_{k}|}\sum_{i\in S_{k}}(b_{i}^{(k)}-\langle \vx^{(k)},\va_i\rangle)\langle \vx^{(k)}-\vx^*,\va_i\rangle. \\
\end{eqnarray*}
The Cauchy-Schwarz inequality yields
\[\sum_{i\in S_{k}}(b_{i}^{(k)}-\langle \vx^{(k)},\va_i\rangle)\langle \vx^{(k)}-\vx^*,\va_i\rangle \leq \Bigg(\sum_{i\in S_{k}}(b_{i}^{(k)}-\langle \vx^{(k)},\va_i\rangle)^2 \sum_{i\in S_{k}}\langle \vx^{(k)}-\vx^*,\va_i\rangle^2\Bigg)^{\frac{1}{2}}\]
and thus,
\begin{eqnarray*}
\mathbb{E}_{i\in S_{k}}2\langle \vx^{(k)}-\vx^*,\vv\rangle &\leq& \frac{2}{|S_{k}|}\Bigg(\sum_{i\in S_{k}}(b_{i}^{(k)}-\langle \vx^{(k)},\va_i\rangle)^2 \sum_{i\in S_{k}}\langle \vx^{(k)}-\vx^*,\va_i\rangle^2\Bigg)^{\frac{1}{2}} \\
{} &\leq& \frac{2}{|S_{k}|}\frac{\sqrt{|S_{k}|}}{m(1-q-\beta)}\bigg(\sqrt{m(1-\beta)}\sigma_{\text{\normalfont{max}}}(A)\lVert \vx^{(k)} - \vx^* \rVert + \|\vn^{(k)}\|_1\bigg)\bigg(\sum_{i\in S_{k}}\langle \vx^{(k)}-\vx^*,\va_i\rangle^2\bigg)^{\frac{1}{2}},
\end{eqnarray*}
where the second inequality follows by applying the bound on $\|\vv\|^2$ above.
At this point, we estimate
\[ \sum_{i\in S_{k}}\langle \vx^{(k)}-\vx^*,\va_i\rangle^2\leq \sum_{i=1}^{m} \langle \vx^{(k)}-\vx^*,\va_i\rangle^2 = \lVert A(\vx^{(k)}-\vx^*)\rVert^2\leq\sigma_{\text{\normalfont{max}}}^2(A)\lVert \vx^{(k)}-\vx^*\rVert^2 \]
and hence
\[\mathbb{E}_{i\in S_{k}} 2\langle \vx^{(k)}-\vx^*,v\rangle\leq\frac{2}{\sqrt{|S_{k}|}}\frac{\sigma_{\text{\normalfont{max}}}(A)\cdot\lVert \vx^{(k)}-\vx^*\rVert}{m(1-q-\beta)}\bigg(\sqrt{m(1-\beta)}\sigma_{\text{\normalfont{max}}}(A)\lVert \vx^{(k)} - \vx^* \rVert + \|\vn^{(k)}\|_1\bigg). \]
Summing up now shows that
\begin{eqnarray*}
\mathbb{E}_{i\in S_{k}}\rVert \vx^{(k+1)}-\vx^*\lVert^2&\leq&\Bigg(1+\frac{2}{\sqrt{|S_{k}|}}\frac{\sqrt{m(1-\beta)}\sigma_{\text{\normalfont{max}}}^2(A)}{m(1-q-\beta)} +\frac{m(1-\beta)\sigma_{\text{\normalfont{max}}}^2(A)}{m^2 (1-q-\beta)^2}\Bigg)\lVert \vx^{(k)}-\vx^*\rVert^2\\
{} &&\quad + \Bigg(\frac{2}{\sqrt{|S_{k}|}}\frac{\sigma_{\text{\normalfont{max}}}(A)}{m(1-q-\beta)} +\frac{2\sqrt{m(1-\beta)}\sigma_{\text{\normalfont{max}}}(A)}{m^2 (1-q-\beta)^2}\Bigg)\lVert \vx^{(k)}-\vx^*\rVert \|\vn^{(k)}\|_1\\
{} &&\quad + \frac{\|\vn^{(k)}\|_1^2}{m^2(1-q-\beta)^2}.
\end{eqnarray*}
\end{proof}

\noindent Let $\vx^{(k)}$ be fixed and consider the set of rows which whose residual entry is less than the quantile,
\[B_k :=\{1\leq j\leq m:|\langle \vx^{(k)},\va_j\rangle-b_{j}^{(k)}|\leq q-\text{\normalfont{quant}}(\{|\langle \vx^{(k)},\va_i\rangle - b_{i}^{(k)}|\}_{i=1}^{m})\}. \]
The previous lemma handled the subset of corrupted equations in $B_k$, $S_{k}\subset B_k$. Now we consider the subset $B_k \setminus S_{k}$ and the situation in which we project onto an uncorrupted row.

\begin{proposition} \label{lem:noisyProjection}
Under the assumptions given by Theorem~\ref{thm:steinerbergerQRKwNoise},
\[ \mathbb{E}_{i\in B_k \setminus S_{k}}\lVert \vx^{(k+1)}-\vx^*\rVert^2\leq \noisediff{k} + \bigg(1-\frac{\sigma_{q-\beta,\text{\normalfont{min}}}^2(A)}{q m}\bigg)\lVert \vx^{(k)}-\vx^*\rVert^2,\]
\end{proposition}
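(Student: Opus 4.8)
The plan is to exploit the fact that on $B_k \setminus S_k$ we project onto a genuinely uncorrupted hyperplane, so the only discrepancy from the exact (noiseless) Kaczmarz step comes from the additive noise $n_i^{(k)}$. First I would write out the QRK update for a sampled index $i \in B_k \setminus S_k$. Using Assumption~\ref{ass:unit_rows} ($\|\va_i\| = 1$) and the fact that an uncorrupted equation satisfies $b_i^{(k)} = \langle \va_i, \vx^* \rangle + n_i^{(k)}$, the residual entry becomes $\langle \va_i, \vx^{(k)} - \vx^* \rangle - n_i^{(k)}$. Setting $r_i := \langle \va_i, \vx^{(k)} - \vx^* \rangle$ and expanding the squared norm then yields the exact identity
\[ \lVert \vx^{(k+1)} - \vx^* \rVert^2 = \lVert \vx^{(k)} - \vx^* \rVert^2 - r_i^2 + (n_i^{(k)})^2, \]
where the cross terms cancel precisely because the row has unit norm. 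This identity is the crux of the argument: it cleanly separates the contraction term $-r_i^2$ from the noise term $(n_i^{(k)})^2$.

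Next I would take the uniform expectation over $i \in B_k \setminus S_k$. The noise term is immediate: since the rows are unit-norm, $\lVert A_{B_k \setminus S_k} \rVert_F^2 = |B_k \setminus S_k|$, so $\mathbb{E}_{i \in B_k \setminus S_k}[(n_i^{(k)})^2] = \lVert \vn_{B_k \setminus S_k}^{(k)} \rVert_2^2 / |B_k \setminus S_k|$, which is exactly $\noisediff{k}$. For the contraction term I would write $\sum_{i \in B_k \setminus S_k} r_i^2 = \lVert A_{B_k \setminus S_k}(\vx^{(k)} - \vx^*) \rVert^2$ and seek a lower bound in terms of the submatrix conditioning parameter $\sigma_{q-\beta,\min}(A)$.

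The main obstacle is bounding this contraction term, which requires careful bookkeeping of the cardinality of $B_k \setminus S_k$ in two opposite directions. On one hand, since $B_k$ contains the smallest $\lfloor qm \rfloor$ residual entries and $|S_k| \le |C_k| \le \lfloor \beta m \rfloor$, we have $|B_k \setminus S_k| \ge \lfloor (q-\beta) m \rfloor$; this lets me choose any subset of exactly $\lfloor (q-\beta)m \rfloor$ rows and, because the summands $r_i^2$ are nonnegative, conclude from the definition of $\sigma_{q-\beta,\min}$ that $\lVert A_{B_k \setminus S_k}(\vx^{(k)} - \vx^*) \rVert^2 \ge \sigma_{q-\beta,\min}^2(A) \lVert \vx^{(k)} - \vx^* \rVert^2$. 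On the other hand, the denominator needs the upper bound $|B_k \setminus S_k| \le |B_k| \le qm$, so that $\mathbb{E}_{i \in B_k \setminus S_k}[r_i^2] \ge \frac{\sigma_{q-\beta,\min}^2(A)}{qm} \lVert \vx^{(k)} - \vx^* \rVert^2$. Substituting both estimates into the expectation of the identity above yields the claimed bound. The delicate point to verify is that these two cardinality estimates are simultaneously consistent, i.e. $\lfloor (q-\beta)m \rfloor \le |B_k \setminus S_k| \le \lfloor qm \rfloor$, which holds precisely under the standing hypothesis $\beta < q$.
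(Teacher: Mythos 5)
Your proof is correct and is essentially the paper's own argument: the paper deduces this proposition by applying inequality \eqref{eq:thm2.5line} from the proof of Theorem~\ref{thm:noisyRK} to the subsystem $A_{B_k\setminus S_k}\vx = \vb^{(k)}_{B_k\setminus S_k}$, and that inequality rests on precisely the identity $\|\vx^{(k+1)}-\vx^*\|^2 = \|\vx^{(k)}-\vx^*\|^2 - r_i^2 + (n_i^{(k)})^2$ that you derive by expanding the square (the paper obtains it via the Pythagorean theorem, which is the same computation). Your explicit bookkeeping --- $|B_k\setminus S_k|\ge\lfloor(q-\beta)m\rfloor$ so that $\|A_{B_k\setminus S_k}(\vx^{(k)}-\vx^*)\|^2\ge\sigma_{q-\beta,\min}^2(A)\,\|\vx^{(k)}-\vx^*\|^2$, together with $\|A_{B_k\setminus S_k}\|_F^2=|B_k\setminus S_k|\le qm$ --- simply spells out the submatrix estimates that the paper leaves implicit in its one-line deduction.
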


Note that the proposition above follows from \eqref{eq:thm2.5line} in the proof of Theorem~\ref{thm:noisyRK} applied to the subsystem $A_{B_k\setminus S_{k}} \vx = \vb^{(k)}_{B_k\setminus S_{k}}$.  With this proposition in hand, we may now prove our main result, Theorem~\ref{thm:steinerbergerQRKwNoise}.

\begin{proof}[Proof of Theorem~\ref{thm:steinerbergerQRKwNoise}]
During the application of QRK we encounter three possibilities for the row that we select at each iteration: the row is either in the set of corrupted rows within the set of admissible rows, in the set of non-corrupted rows within the set of admissible rows, or not in the set of admissible rows. Thus, when we are selecting a row from $B_k$, the probabilities to select a corrupted row or an uncorrected row are
\begin{equation}
{\mathbb{P}(i_k\in S_{k} | i_k\in B_k)} = \frac{|S_{k}|}{q m}
\end{equation}
and
\begin{equation}
{\mathbb{P}(i_k\in B_k\setminus S_{k} | i_k\in B_k)} = 1 - \frac{|S_{k}|}{q m}.
\end{equation}

Now, if we are using Algorithm~\ref{QuantileRK2}, we must condition on the event of working wholly within $B_k$. If we are using Algorithm~\ref{QuantileRK1}, we can sample either within or beyond $B_k$. Applying the law of total expectation twice gives us the expression
\begin{eqnarray*}
\expect{\|\vx^{(k+1)} - \vx^*\|^2}{i_k} &=& \bigg(\conexpect{\|\vx^{(k+1)} - \vx^*\|^2}{i_k}{i_k \in S_{k}} \cdot \mathbb{P}(i_k\in S_{k} | i_k\in B_k) \\
{} &&\quad + \conexpect{\|\vx^{(k+1)} - \vx^*\|^2}{i_k}{i_k \in B_k\setminus S_{k}} \cdot \mathbb{P}(i_k\in B_k\setminus S_{k} | i_k\in B_k)\bigg) \cdot \mathbb{P}(i\in B_k)  \\
{} &&\quad + \conexpect{\|\vx^{(k+1)} - \vx^*\|^2}{i_k}{i_k \notin B_k} \cdot \mathbb{P}(i_k\notin B_k)
\end{eqnarray*}
where $\mathbb{P}(i_k\in B_k)$ and $\mathbb{P}(i_k\notin B_k)$ are 1 and 0 respectively for Algorithm~\ref{QuantileRK2}, and have the values $q$ and $1-q$ when using Algorithm~\ref{QuantileRK1}. \\\\
Using the probabilities above with Lemma~\ref{lem:corruptProjection} and Proposition~\ref{lem:noisyProjection}, we arrive at
\begin{eqnarray*}
\expect{\|\vx^{(k+1)} - \vx^*\|^2}{i_k} &\leq& \Bigg(\frac{|S_{k}|}{q m}\Bigg(1+\frac{2}{\sqrt{|S_{k}|}}\frac{\sqrt{m(1-\beta)}\sigma_{\text{\normalfont{max}}}^2(A)}{m(1-q-\beta)} +\frac{m(1-\beta)\sigma_{\text{\normalfont{max}}}^2(A)}{m^2 (1-q-\beta)^2}\Bigg)\lVert \vx^{(k)}-\vx^*\rVert^2\\
{} &&\quad + \frac{|S_{k}|}{q m}\Bigg(\frac{2}{\sqrt{|S_{k}|}}\frac{\sigma_{\text{\normalfont{max}}}(A)}{m(1-q-\beta)} +\frac{2\sqrt{m(1-\beta)}\sigma_{\text{\normalfont{max}}}(A)}{m^2 (1-q-\beta)^2}\Bigg)\lVert \vx^{(k)}-\vx^*\rVert \lVert \vn^{(k)}\rVert\\
{} &&\quad + \frac{|S_{k}|}{q m}\frac{\lVert \vn^{(k)} \rVert^2}{m^2(1-q-\beta)^2} + \bigg( 1 - \frac{|S_{k}|}{q m}\bigg)\bigg(1-\frac{\sigma_{q-\beta,\text{\normalfont{min}}}^2(A)}{q m}\bigg)\lVert \vx^{(k)}-\vx^*\rVert^2\\
{} &&\quad + \bigg( 1 - \frac{|S_{k}|}{q m}\bigg)\noisediff{k}\Bigg) \cdot \mathbb{P}(i_k\in B_k) + \lVert \vx^{(k)}-\vx^*\rVert^2 \cdot \mathbb{P}(i_k\notin B_k)
\end{eqnarray*}
Multiplying in the probabilities and regrouping, we arrive at the expression
\begin{eqnarray*}
\expect{\|\vx^{(k+1)} - \vx^*\|^2}{i_k} &\leq& \Bigg(\bigg(1+\frac{2\sqrt{|S_{k}|}\sqrt{m(1-\beta)}\sigma_{\text{\normalfont{max}}}^2(A)}{q m^2(1-q-\beta)^2}+ \frac{|S_{k}|(1-\beta)\sigma_{\text{\normalfont{max}}}^2(A)}{q m^2 (1-q-\beta)} -\frac{\sigma_{q-\beta,\text{\normalfont{min}}}^2(A)}{q m}\\
{} &&\quad + \frac{|S_{k}|\sigma_{q-\beta,\text{\normalfont{min}}}^2(A)}{q^2 m^2}\bigg)\lVert \vx^{(k)}-\vx^*\rVert^2 \\
{} &&\quad+ \bigg(\frac{2\sqrt{|S_{k}|}\sigma_{\text{\normalfont{max}}}(A)}{q m^2(1-q-\beta)}+\frac{2|S_{k}|\sqrt{m(1-\beta)}\sigma_{\text{\normalfont{max}}}(A)}{q m^3(1-q-\beta)^2} \bigg)\lVert \vx^{(k)}-\vx^*\rVert\lVert \vn^{(k)} \rVert \\
{} &&\quad + \bigg( 1 - \frac{|S_{k}|}{q m}\bigg)\noisediff{k} +\frac{|S_{k}|\lVert \vn^{(k)} \rVert^2}{q m^3(1-q-\beta)^2}\Bigg)\cdot\mathbb{P}(i_k\in B_k) + \lVert \vx^{(k)}-\vx^*\rVert^2 \cdot \mathbb{P}(i_k\notin B_k).
\end{eqnarray*}
Rearranging the expression we arrive at
\begin{eqnarray*}
\expect{\|\vx^{(k+1)} - \vx^*\|^2}{i_k} &\leq& \lVert \vx^{(k)}-\vx^*\rVert^2 + \bigg(\frac{\sigma_{\text{\normalfont{max}}}^2(A)}{q m}\bigg(\frac{2\sqrt{|S_{k}|}\sqrt{m(1-\beta)}}{m(1-q-\beta)}+ \frac{|S_{k}|(1-\beta)}{m(1-q-\beta)^2}\bigg)\\
{} &&\quad - \frac{\sigma_{q-\beta,\text{\normalfont{min}}}^2(A)}{q m}\bigg(1 -\frac{|S_{k}|}{q m}\bigg)\bigg)\cdot\mathbb{P}(i_k\in B_k)\cdot\lVert \vx^{(k)}-\vx^*\rVert^2 \\
{} &&\quad + \Bigg(\Bigg( \frac{\sigma_{\text{\normalfont{max}}}(A)}{q m}\bigg( \frac{2\sqrt{|S_{k}|}}{m(1-q-\beta)}+\frac{2|S_{k}|\sqrt{m(1-\beta)}}{m^2(1-q-\beta)^2} \bigg)\Bigg)\lVert \vx^{(k)}-\vx^*\rVert\lVert \vn^{(k)} \rVert\\
{} &&\quad + \bigg( 1 - \frac{|S_{k}|}{q m}\bigg)\noisediff{k} + \frac{|S_{k}|\lVert \vn^{(k)} \rVert^2}{q m^3(1-q-\beta)^2}\Bigg)\cdot\mathbb{P}(i_k\in B_k).
\end{eqnarray*}

Let us consider the expression
\[\frac{\sigma_{\text{\normalfont{max}}}^2(A)}{q m}\bigg(\frac{2\sqrt{|S_{k}|}\sqrt{m(1-\beta)}}{m(1-q-\beta)}+ \frac{|S_{k}|(1-\beta)}{m(1-q-\beta)^2}\bigg) -\frac{\sigma_{q-\beta,\text{\normalfont{min}}}^2(A)}{q m}\bigg(1 -\frac{|S_{k}|}{q m}\bigg).\]
It is the case that it is monotonically increasing in $|S_{k}|$, thus let us use the worst case $|S_{k}|=\beta m$ to produce the upper bound
\begin{equation} \label{worst_case_expression}
    \frac{\sigma_{\text{\normalfont{max}}}^2(A)}{q m}\bigg(\frac{2\sqrt{\beta(1-\beta)}}{(1-q-\beta)}+ \frac{\beta(1-\beta)}{(1-q-\beta)^2}\bigg) -\frac{\sigma_{q-\beta,\text{\normalfont{min}}}^2(A)}{q m}\bigg(\frac{q-\beta}{q}\bigg).
\end{equation}
We denote the negation of this upper bound $\vartheta$ and have
\begin{eqnarray*}
\expect{\|\vx^{(k+1)} - \vx^*\|^2}{i_k} &\leq& \lVert \vx^{(k)}-\vx^*\rVert^2 - \vartheta \lVert \vx^{(k)}-\vx^*\rVert^2 \cdot \mathbb{P}(i_k\in B_k)\\
{} &&\quad + \Bigg(\Bigg( \frac{\sigma_{\text{\normalfont{max}}}(A)}{q m}\bigg( \frac{2\sqrt{|S_{k}|}}{m(1-q-\beta)}+\frac{2|S_{k}|\sqrt{m(1-\beta)}}{m^2(1-q-\beta)^2} \bigg)\Bigg)\lVert \vx^{(k)}-\vx^*\rVert\lVert \vn^{(k)} \rVert\\
{} &&\quad +  \bigg( 1 - \frac{|S_{k}|}{q m}\bigg)\noisediff{k} + \frac{|S_{k}|\lVert \vn^{(k)} \rVert^2}{q m^3(1-q-\beta)^2}\Bigg)\cdot\mathbb{P}(i_k\in B_k)
\end{eqnarray*}
Here, we plug in the worst case values of $|S_{k}|$ for each term, that is the value for which these terms are maximized, giving us
\begin{eqnarray*}
\expect{\|\vx^{(k+1)} - \vx^*\|^2}{i_k} &\leq& \lVert \vx^{(k)}-\vx^*\rVert^2 - \vartheta \lVert \vx^{(k)}-\vx^*\rVert^2 \cdot \mathbb{P}(i_k\in B_k)\\
{} &&\quad + \Bigg(\Bigg( \frac{\sigma_{\text{\normalfont{max}}}(A)}{q m}\bigg( \frac{2\sqrt{\beta m}}{m(1-q-\beta)}+\frac{2\beta\sqrt{m(1-\beta)}}{m(1-q-\beta)^2} \bigg)\Bigg)\lVert \vx^{(k)}-\vx^*\rVert\lVert \vn^{(k)} \rVert\\
{} &&\quad +  \noisediff{k} + \frac{\beta\lVert \vn^{(k)} \rVert^2}{q m^2(1-q-\beta)^2}\Bigg)\cdot\mathbb{P}(i_k\in B_k).
\end{eqnarray*}

Now, applying the inequality $\|\vx^{(k)} - \vx^*\| \|\vn^{(k)}\| \le \frac12\left(\|\vx^{(k)} - \vx^*\|^2 +  \|\vn^{(k)}\|^2\right)$, we have
\begin{eqnarray*}
\expect{\|\vx^{(k+1)} - \vx^*\|^2}{i_k} &\leq& \lVert \vx^{(k)}-\vx^*\rVert^2 - \vartheta \lVert \vx^{(k)}-\vx^*\rVert^2 \cdot \mathbb{P}(i\in B_k)\\
{} &&\quad + \frac{\sigma_{\text{\normalfont{max}}}(A)}{q m}\bigg( \frac{\sqrt{\beta m}}{m(1-q-\beta)}+\frac{\beta\sqrt{m(1-\beta)}}{m(1-q-\beta)^2} \bigg)\lVert \vx^{(k)}-\vx\rVert^2\cdot\mathbb{P}(i_k\in B_k)\\
{} &&\quad +  \Bigg(\Bigg( \frac{\sigma_{\text{\normalfont{max}}}(A)}{q m}\bigg( \frac{\sqrt{\beta m}}{m(1-q-\beta)}+\frac{\beta\sqrt{m(1-\beta)}}{m(1-q-\beta)^2} \bigg) +\frac{\beta}{q m^2(1-q-\beta)^2}\Bigg)\lVert \vn^{(k)} \rVert^2\\
{} &&\quad +  \noisediff{k}\Bigg)\cdot\mathbb{P}(i_k\in B_k).
\end{eqnarray*}
Now note that
\[\varphi = \vartheta - \frac{\sigma_{\text{\normalfont{max}}}(A)}{q m}\bigg( \frac{\sqrt{\beta m}}{m(1-q-\beta)}+\frac{\beta\sqrt{m(1-\beta)}}{m(1-q-\beta)^2} \bigg) \]
and
\[\sysconst = \frac{\sigma_{\text{\normalfont{max}}}(A)}{q m}\bigg( \frac{\sqrt{\beta m}}{m(1-q-\beta)}+\frac{\beta\sqrt{m(1-\beta)}}{m(1-q-\beta)^2} \bigg) +\frac{\beta}{q m^2(1-q-\beta)^2}.\]
Thus
\[\expect{\|\vx^{(k+1)} - \vx^*\|^2}{i_k} \leq \bigg(1 - \varphi\cdot\mathbb{P}(i_k\in B_k)\bigg)\lVert \vx^{(k)}-\vx^*\rVert^2 + \bigg(\noisediff{k} + \sysconst \noisediffb{k}\bigg)\cdot\mathbb{P}(i_k\in B_k), \]
and letting $p = \mathbb{P}(i_k\in B_k)$,
\begin{align}
    \expect{\|\vx^{(k+1)} - \vx^*\|^2}{i_k} &\leq \bigg(1 - p\varphi\bigg)\lVert \vx^{(k)}-\vx^*\rVert^2 + p\bigg(\noisediff{k} + \sysconst \noisediffb{k}\bigg) \label{eq:arbitrary noise theorem}
    \\&\leq \bigg(1 - p\varphi\bigg)\lVert \vx^{(k)}-\vx^*\rVert^2 + p\bigg(\frac{\|\vn^{(k)}\|_2^2}{(q-\beta)m} + \sysconst \noisediffb{k}\bigg)
\end{align}
since $|B_k\setminus S_k| \ge (q-\beta)m$ and the rows of $A$ are normalized under Assumption~\ref{ass:unit_rows}.

We recursively apply this bound to bound the total expectation with respect to all choices of row indices in all iterations, which gives
\[
\expect{\lVert \vx^{(k+1)}-\vx^*\rVert^2}{} \leq (1 - p\varphi)^{k+1}\lVert\vx^{(0)}-\vx^*\rVert^2 + p\sum_{j=0}^{k} (1 - p\varphi)^{k-j} \left(\frac{\|\vn^{(k)}\|_2^2}{(q-\beta)m} + \sysconst \noisediffb{j}\right).
\]
Recall \[\horizon{k} := \sum_{j=0}^{k} (1 - p\varphi)^{k-j} \left(\frac{\|\vn^{(k)}\|_2^2}{(q-\beta)m} + \sysconst \noisediffb{j}\right)\]
\noindent and therefore
\[
\expect{\lVert \vx^{(k+1)}-\vx^*\rVert^2}{} \leq (1 - p\varphi)^{k+1}\lVert\vx^{(0)}-\vx^*\rVert^2 + p\horizon{k}.
\]
\end{proof}

\subsection{Proofs of Corollaries}\label{subsec:proofs of corollaries}

In this section, we prove the corollaries of our main result, Theorem~\ref{thm:steinerbergerQRKwNoise}.  Our first corollary case, Corollary~\ref{cor:single_noisy_cor}~\ref{cor:boundednoisecorr}, specializes Theorem~\ref{thm:steinerbergerQRKwNoise} to the case that the noise in each iteration is uniformly bounded.
\begin{proof}[Proof of Corollary~\ref{cor:single_noisy_cor}~\ref{cor:boundednoisecorr}]
Under Assumption~\ref{ass:unit_rows}, we have $\noisediff{j} \le n_{\max}^2$.  Additionally, we have $\noisediffb{j} \le m^2 n_{\max}^2$.
Hence, introducing these bounds into~\eqref{eq:arbitrary noise theorem} and iterating as in the proof of Theorem~\ref{thm:steinerbergerQRKwNoise}, we have the following bound on the convergence horizon term,
\begin{equation}
     (1 + \sysconst m^2)(n_{max})^2 \sum_{j=0}^{k} (1 - \qrkprob \varphi)^{k-j} \le (1 + \sysconst m^2)(n_{max})^2 \frac{1-(1 - \qrkprob \varphi)^{k+1}}{1-(1 - \qrkprob \varphi)}.
\end{equation}
Therefore, we get
\begin{eqnarray*}
    \expect{\| \vx^{(k+1)}-\vx^*\|^2}{i_{1}, \cdots, i_{k}} &\leq& {(1 - \qrkprob \varphi)^{k+1}\lVert\vx^{(0)}-\vx^*\rVert^2 + \qrkprob(1 + \sysconst m^2)(n_{max})^2 \frac{1-(1 - \qrkprob \varphi)^{k+1}}{1-(1 - \qrkprob \varphi)}} \\
    {} &\leq& {(1 - \qrkprob \varphi)^{k+1}\lVert\vx^{(0)}-\vx^*\rVert^2 + (1 + \sysconst m^2)(n_{max})^2 \frac{1-(1 - \qrkprob \varphi)^{k+1}}{\varphi}}.
\end{eqnarray*}
\end{proof}

Our second corollary case, Corollary~\ref{cor:single_noisy_cor}~\ref{cor:defnoise}, specializes Theorem~\ref{thm:steinerbergerQRKwNoise} to the case that the noise is sampled from a known distribution.
\begin{proof}[Proof of Corollary~\ref{cor:single_noisy_cor}~\ref{cor:defnoise}]
Continuing the proof of Theorem~\ref{thm:steinerbergerQRKwNoise} from equation \eqref{eq:arbitrary noise theorem}, we have
\[\expect{\| \vx^{(k+1)}-\vx^*\|^2}{i_k} \leq (1-\qrkprob\varphi)\lVert \vx^{(k)}-\vx^*\rVert^2 + \qrkprob\left(\noisediff{k} + \sysconst\noisediffb{k}\right). \]
We next take the expected value of the inequality with respect to the noise term at the $k^{\text{th}}$ iteration,
\begin{eqnarray*}
{\expect{\expect{\| \vx^{(k+1)}-\vx^*\|^2}{i_{k}}}{\vn^{(k)}}} &\leq& {(1-\qrkprob\varphi)\lVert\vx^{(k)}-\vx^*\rVert^2 + \qrkprob\left(\expect{\noisediff{k}}{\vn^{(k)}} + \sysconst \expect{\noisediffb{k}}{\vn^{(k)}}\right)}.\\
\end{eqnarray*}
We now apply the second moment formula and Assumption~\ref{ass:unit_rows} to obtain
\begin{equation}
    {\expect{\noisediff{k}}{\vn^{(k)}}} = {(s^2 + \mu^2) \sum_{i\in B\setminus S_{k}} \frac{1}{|{B\setminus S_{k}}|}} = {s^2 + \mu^2}.
\end{equation}
Again, applying the second moment formula yields
\begin{equation}
    {\expect{\noisediffb{k}}{\vn^{(k)}}} = {\left(\sum\limits_{i=1}^{m} \expect{ |n_i^{(k)}|}{\vn^{(k)}}\right)^2 + \sum\limits_{i=1}^{m} \text{Var} \left[|n_i^{(k)}|\right]} = {m^2(\mu')^2 + m(s')^2.}
\end{equation}
Thus
\begin{eqnarray*}
\expect{\expect{\| \vx^{(k+1)}-\vx^*\|^2}{i_{k}}}{\vn^{(k)}} &\leq& {(1-\qrkprob\varphi)\lVert\vx^{(k)}-\vx^*\rVert^2 + p({\mu^2 + s^2} + \sysconst [m^2(\mu')^2 + m(s')^2]).}
\end{eqnarray*}
Let
\[
\distfunc:= {\mu^2 + s^2} + \sysconst [m^2(\mu')^2 + m(s')^2].
\]
Therefore we have
\[
\expect{\expect{\| \vx^{(k+1)}-\vx^*\|^2}{i_{k}}}{\vn^{(k)}} \leq (1-\qrkprob\varphi)\lVert\vx^{(k)}-\vx^*\rVert^2 + \qrkprob\distfunc .
\]
Inducting yields
\begin{eqnarray*}
\expect{\| \vx^{(k+1)}-\vx^*\|^2}{} &\leq& {(1-\qrkprob \varphi)^{k+1}\lVert\vx^{(0)}-\vx^*\rVert^2 + \frac{1-(1-\qrkprob \varphi)^{k+1}}{1-(1-\qrkprob \varphi)} \qrkprob \distfunc}\\
{} &=& {(1-\qrkprob \varphi)^{k+1}\lVert\vx^{(0)}-\vx^*\rVert^2 + \frac{1-(1-\qrkprob \varphi)^{k+1}}{\varphi} \distfunc.}
\end{eqnarray*}
\end{proof}

Our last corollary case, Corollary~\ref{cor:single_noisy_cor}~\ref{cor:gaussian noise}, specializes to the case that the noise has entries sampled from a Gaussian distribution.
\begin{proof}[Proof of Corollary~\ref{cor:single_noisy_cor}~\ref{cor:gaussian noise}]
    Applying Corollary~\ref{cor:single_noisy_cor}~\ref{cor:defnoise}, we know that for an arbitrary distribution of $n_i^{(k)}$ with mean $\mu$ and standard deviation $s$, and distribution of $|n_i^{(k)}|$ with mean $\mu'$ and standard deviation $s'$, we have
    \[
    \expect{\| \vx^{(k+1)}-\vx^*\|^2}{} \leq (1-\qrkprob \varphi)^{k+1}\lVert\vx^{(0)}-\vx^*\rVert^2 + \frac{1-(1-\qrkprob \varphi)^{k+1}}{\varphi} \distfunc.
    \]
    Under the assumption that $n_i^{(k)}$ is sampled i.i.d.\ from $\mathcal{N}(0,s^2)$, we have $\mu' = s\sqrt{\frac{2}{\pi}}$ and $s' = s\sqrt{1-\frac{2}{\pi}}$; see e.g.,~\cite{leone1961folded}. Hence,
    \begin{eqnarray*}
        {\frac{1-(1-\qrkprob \varphi)^{k+1}}{\varphi} \distfunc} &=& {\frac{1-(1-\qrkprob \varphi)^{k+1}}{\varphi}s^2\left(1 + \sysconst \left[m^2 \frac{2}{\pi} + m \left(1-\frac{2}{\pi}\right)\right]\right)}
    \end{eqnarray*}
    and thus
    \[
    \expect{\| \vx^{(k+1)}-\vx^*\|^2}{} \leq (1-\qrkprob \varphi)^{k+1}\lVert\vx^{(0)}-\vx^*\rVert^2 + \frac{1-(1-\qrkprob \varphi)^{k+1}}{\varphi}s^2\left(1 + \sysconst \left[m^2 \frac{2}{\pi} + m \left(1-\frac{2}{\pi}\right)\right]\right).
    \]
\end{proof}

Finally, our last corollary, Corollary~\ref{cor:noisy corruption detection}, uses Markov's inequality to transform the bound on the expectation provided by Theorem~\ref{thm:steinerbergerQRKwNoise} and Corollary~\ref{cor:single_noisy_cor} to a lower bound on the probability that the time-varying corruption can be detected by examining the largest entries in the residual at the $k$th iteration.
\begin{proof}[Proof of Corollary~\ref{cor:noisy corruption detection}]
By Corollary~\ref{cor:single_noisy_cor}~\ref{cor:boundednoisecorr},
\[\expect{\| \vx^{(k)}-\vx^*\|^2}{} \leq (1 - \qrkprob \varphi)^{k}\lVert\vx^{(0)}-\vx^*\rVert^2 + (1 + \sysconst m^2) \frac{n_{max}^2}{\varphi} \]
and applying Markov's inequality yields
\[\mathbb{P}[\lVert \vx^{(k)}-\vx^*\rVert^2 \ge a] \le \frac{(1 - \qrkprob \varphi)^{k}\lVert\vx^{(0)}-\vx^*\rVert^2 + (1 + \sysconst m^2) \frac{n_{max}^2}{\varphi}}{a}\] for all $a > 0$. If we choose $a = (c^{(k)}_{\min})^2/4$, then
\[\mathbb{P}[\lVert \vx^{(k)}-\vx^*\rVert \le \frac{c^{(k)}_{\min}}{2}] \ge 1 - \frac{4(1 - p\varphi)^{k}\lVert\vx^{(0)}-\vx^*\rVert^2}{(c^{(k)}_{\min})^2} - \frac{1}{M},\] since $(1 +  \sysconst m^2)\frac{n_{\max}^2}{\varphi}  \le (c_{\min}^{(k)})^2/4M$.

Now, note that in the case that $\lVert \vx^{(k)}-\vx^*\rVert \le \frac{c^{(k)}_{\min}}{2},$ if $i \not\in C_{k}$, we have
\begin{align*}
    |\va_i^\top \vx^{(k)} - b_i^{(k)}| = |\va_i^\top \vx^{(k)} - b_i| \le \|\vx^{(k)} - \vx^*\| \le \frac{c^{(k)}_{\min}}{2},
\end{align*}
while if $i \in C_{k}$, we have
\begin{align*}
    \left|\va_i^\top \vx^{(k)} - b_i^{(k)}\right| = \left|\va_i^\top \vx^{(k)} - b_i - c_i^{(k)}\right| \ge \left|\left|c_i^{(k)}\right| - \left|\va_i^\top \vx^{(k)} - b_i\right|\right| \ge c^{(k)}_{\min} - \frac{c^{(k)}_{\min}}{2}.
\end{align*}
Thus, we have \[\text{supp}(\vc^{(k)}) \subset \text{argmax}_{\substack{S \subset [m], \\|S| = \lfloor\beta m\rfloor}} \sum_{i \in S} |\va_i^\top \vx^{(k)} - b_i^{(k)}|.\]
\end{proof}

\section{Numerical Experiments}\label{sec:numerical experiments}
In this section, we present numerical experiments using QRK with various quantiles $q$. In the simulated experiments, we compare the theoretical convergence guarantee, Theorem~\ref{thm:steinerbergerQRKwNoise}, to the empirical performance of QRK,
measured by approximation error $\|\vx^{(k)} - \vx^*\|^2$, averaged over 10 random trials.  Note that in our calculation of the upper bound provided in Theorem~\ref{thm:steinerbergerQRKwNoise}, we will not directly calculate $\sigma_{q-\beta,\min}(A)$, but rather use the lower bound reported by Steinerberger in~\cite{steinerberger2021quantile}.

The number of rows $m = 20000$ and number of columns $n = 100$ are fixed
for all simulated experiments. The solution to the system is a vector $\vx^* \in \mathbb{R}^n$
where each entry is drawn i.i.d.\ from a standard Gaussian distribution. In
each experiment, the systems are built as $\vb = A\vx^*$ with $\vn^{(k)}$ and $\vc^{(k)}$ generated as described below.

The experiments presented in this section were performed in Python 3.9.7 with Numpy 1.23.5 on a Linux server with 1TiB memory and an AMD EPYC 7313 16-core processor.
In all experiments, we use the Algorithm~\ref{QuantileRK2} implementation of QRK and the corresponding theoretical bounds.  Results for Algorithm~\ref{QuantileRK1} would be similar.

\subsection{QRK on systems with static vs. time-varying corruption and noise}

In this section, we explore the effect of time-varying noise and corruption on the convergence of QRK.  We generate a system with $A \in \mathbb{R}^{20000 \times 100}$ generated with i.i.d.\ entries sampled from $\mathcal{N}(0,1)$ and row-normalize $A$, we generate $\vx^* \in \mathbb{R}^{100}$ with i.i.d.\ entries sampled from $\mathcal{N}(0,1)$.  For the static noise-free system, we generate $\vb^{(k)} = A\vx^* + \vc$ for a static corruption vector $\vc$ with $\beta = |\text{supp}(\vc)|/m = 10^{-3}$.  For the varying corruption and noise-free system, we generate $\vb^{(k)} = A\vx^* + \vc^{(k)}$ for time-varying corruption vectors $\vc^{(k)}$ with $\beta = |\text{supp}(\vc^{(k)})|/m = 10^{-3}$ in each iteration.  The positions of all corruption vectors are randomly selected and these entries are set to $10$.  We apply QRK with both these systems (static and time-varying) and compare the empirical convergence to the bound in Theorem~\ref{thm:steinerbergerQRKwNoise}; see the left plot of Figure~\ref{fig:static-time-varying-corruption}.  For the static noisy system, we generate $\vb^{(k)} = A\vx^* + \vc + \vn$ for a static corruption vector $\vc$ with $\beta = |\text{supp}(\vc)|/m = 10^{-3}$ and a static noise vector $\vn$ with entries sampled i.i.d.\ from $\mathcal{N}(0,0.001)$.  For the varying corruption and varying noise system, we generate $\vb^{(k)} = A\vx^* + \vc^{(k)} + \vn^{(k)}$ for time-varying corruption vectors $\vc^{(k)}$ with $\beta = |\text{supp}(\vc^{(k)})|/m = 10^{-3}$ and time-varying noise vectors $\vn^{(k)}$ with entries sampled i.i.d.\ from $\mathcal{N}(0,0.001)$, in each iteration.  The positions of all corruption vectors are randomly selected and these entries are set to $10$.  We apply QRK with both these systems (static and time-varying) and compare the empirical convergence to the bound in Theorem~\ref{thm:steinerbergerQRKwNoise}; see the right plot of Figure~\ref{fig:static-time-varying-corruption}.  We note that the time-varying aspect of the corruption and noise has little effect on the behavior of QRK or the bound provided.

\begin{figure}
    \centering
    \includegraphics[width=0.49\textwidth]{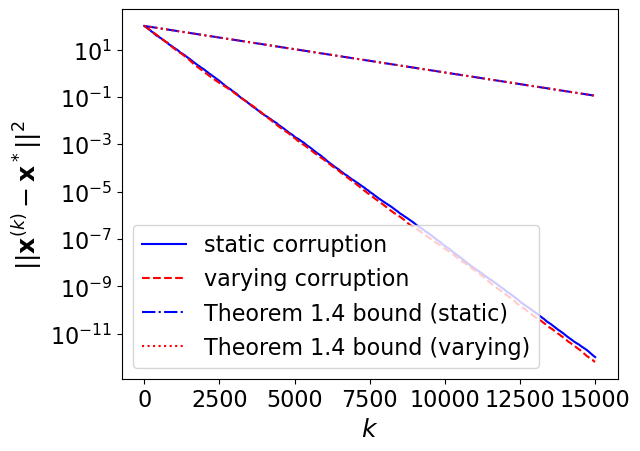}
    \includegraphics[width=0.49\textwidth]{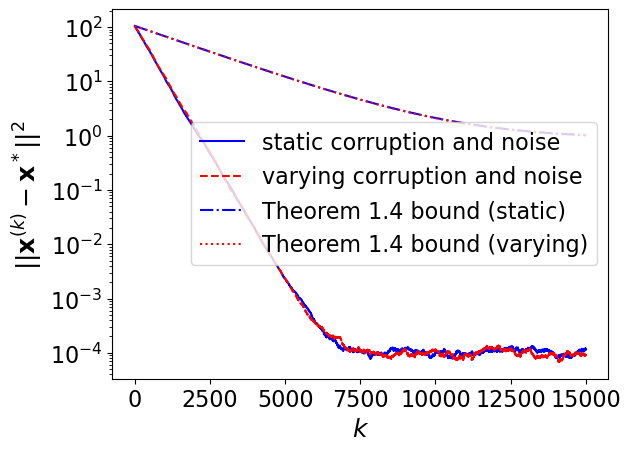}
    \caption{Empirical behavior of QRK ($q = 0.6$) and upper bound provided in Theorem~\ref{thm:steinerbergerQRKwNoise} on (left) static noise-free ($\vb^{(k)} = \vb + \vc$) and time-varying noise-free ($\vb^{(k)} = \vb + \vc^{(k)}$) systems and (right) static noisy ($\vb^{(k)} = \vb + \vc + \vn$) and time-varying noisy ($\vb^{(k)} = \vb + \vc^{(k)} + \vn^{(k)}$) systems defined by $A \in \mathbb{R}^{20000 \times 100}$ where corruption vectors are generated with $\beta m$ uniformly randomly selected entries equal to one and the remainder equal to 0 where $\beta = 0.001$. The noise vectors have entries sampled i.i.d.\ from $\mathcal{N}(0,0.001)$. }
    \label{fig:static-time-varying-corruption}
\end{figure}

\subsection{QRK on systems with time-varying noise and corruption}
In this section we compare the upper bound in Theorem~\ref{thm:steinerbergerQRKwNoise} to the empirical behavior of QRK on a system with time-varying noise and corruption.
We generate a system with $A \in \mathbb{R}^{20000 \times 100}$ generated with i.i.d.\ entries sampled from $\mathcal{N}(0,1)$ and row-normalize $A$, we generate $\vx^* \in \mathbb{R}^{100}$ with i.i.d.\ entries sampled from $\mathcal{N}(0,1)$, and $\vb^{(k)} = A\vx^* + \vn^{(k)} + \vc^{(k)}$ where in each iteration $\vn^{(k)}$ is sampled from $\mathcal{N}(0,s^2)$ and we have corruption vector $\vc^{(k)}$ defined by $\beta = |\text{supp}(\vc^{(k)})|/m$ with randomly selected entries set to $10$. Finally, we implement QRK on this system with a quantile of size $q$. In Figure~\ref{fig:thm1.4q} we provide a set of three plots illustrating how the empirical behavior of QRK and the bound of Theorem~\ref{thm:steinerbergerQRKwNoise} vary with respect to $q$, $\beta$, and $s$.

\begin{figure}
    \includegraphics[width=0.5\textwidth]{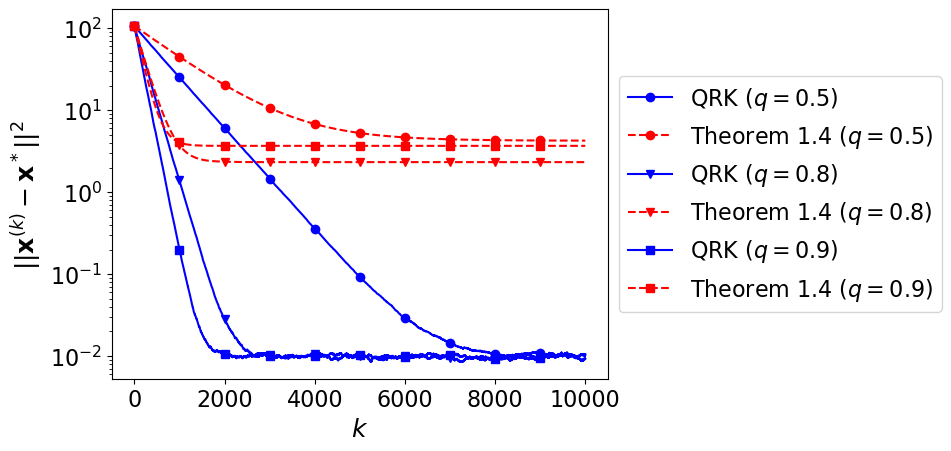}\hfill%
    \includegraphics[width=0.5\textwidth]{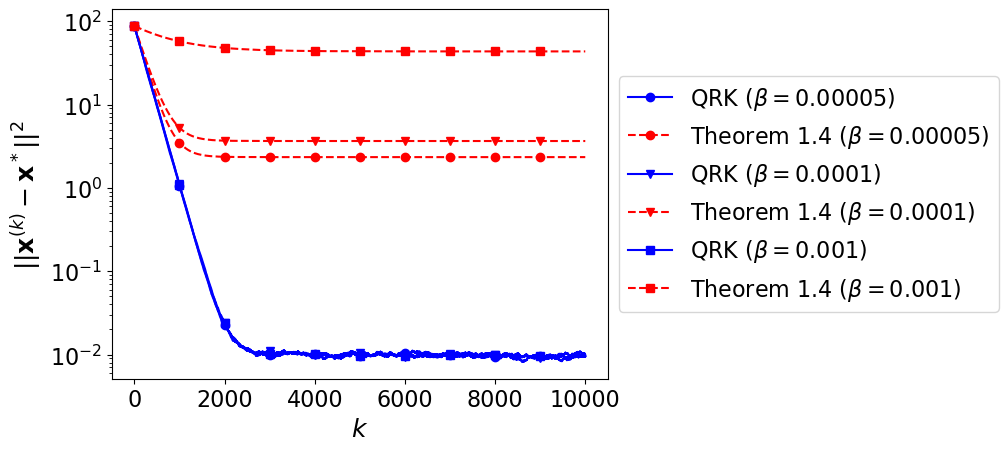}\hfill%
    \includegraphics[width=0.5\textwidth]{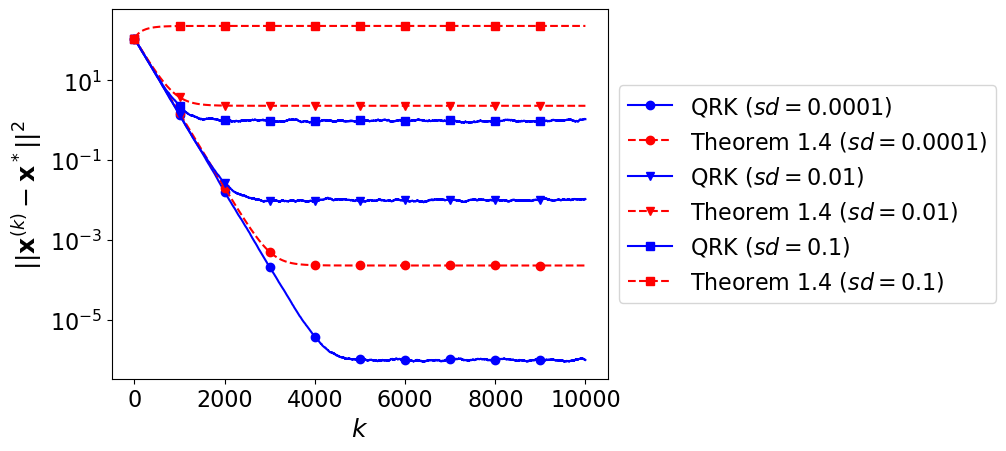}
    \caption{Empirical behavior of QRK and upper bound provided in Theorem~\ref{thm:steinerbergerQRKwNoise} on system defined by $A \in \mathbb{R}^{20000 \times 100}$ and $\vb^{(k)} = \vb + \vn^{(k)} + \vc^{(k)}$ where corruption $\vc^{(k)}$ is a vector with $\beta m$ uniformly randomly selected entries equal to the corruption size (10) and the remainder equal to 0. The noise $\vn^{(k)}$ is an i.i.d.\ random vector sampled in each iteration with entries from $\mathcal{N}(0,s^2)$. The upper left shows behavior as $q$ varies between 0.5, 0.8 and 0.9, while $\beta = 0.00005$ and $s = 0.01$. The upper right shows behavior as $\beta$ varies between 0.00005, 0.0001 and 0.001, while $q = 0.8$ and $s = 0.01$. The lower center shows behavior as $s$ varies between 0.0001, 0.01 and 0.1, while $q = 0.8$ and $\beta = 0.00005$. Experiments are averaged over 10 trials.}
    \label{fig:thm1.4q}
\end{figure}

\subsection{QRK on systems with significant corruption}

In this section, we examine the robustness of QRK to systems with larger fraction of corruption, even for systems in which our theoretical convergence results do not apply.  We generate a system with $A \in \mathbb{R}^{20000 \times 100}$ generated with i.i.d.\ entries sampled from $\mathcal{N}(0,1)$ and row-normalize $A$, we generate $\vx^* \in \mathbb{R}^{100}$ with i.i.d.\ entries sampled from $\mathcal{N}(0,1)$, and $\vb^{(k)} = A\vx^* + \vn^{(k)} + \vc^{(k)}$ where in each iteration $\vn^{(k)}$ is sampled from $\mathcal{N}(0,10^{-8})$ and we have corruption vector $\vc^{(k)}$ defined by $\beta = |\text{supp}(\vc^{(k)})|/m$ with randomly selected entries set to $10$. Finally, we implement QRK on this system with a quantile of size $q = 0.8$. In Figure~\ref{fig:big_beta} we plot the empirical behavior of QRK on systems with fraction of corruptions $\beta$ varying between $0.1, 0.15, 0.2,$ and $0.25$. We note that QRK converges to the threshold determined by the noise for all values of $\beta \le 1-q$.  When $\beta$ exceeds $1-q$, some corrupted indices are included in $B_k$ each iteration and cause QRK to diverge.

\begin{figure}
    \includegraphics[width=0.5\textwidth]{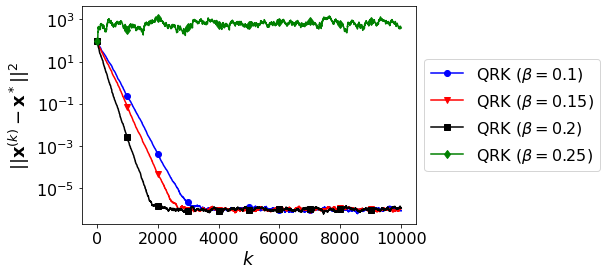}
    \caption{Empirical behavior of QRK with $q = 0.8$ on system defined by $A \in \mathbb{R}^{20000 \times 100}$ and $\vb^{(k)} = \vb + \vn^{(k)} + \vc^{(k)}$ where corruption $\vc^{(k)}$ is a vector with $\beta m$ uniformly randomly selected entries equal to the corruption size (10) and the remainder equal to 0. The noise $\vn^{(k)}$ is an i.i.d.\ random vector sampled in each iteration with entries from $\mathcal{N}(0,10^{-8})$. Plots exhibit error vs iteration on systems with fraction of corruption $\beta \in \{0.1, 0.15, 0.2, 0.25\}$.}
    \label{fig:big_beta}
\end{figure}

\subsection{QRK for corruption detection}

In this section, we compare the fraction of corrupted indices that are detected by the largest magnitude entries of the residual of QRK to the lower bound on the probability that all are detected provided by Corollary~\ref{cor:noisy corruption detection}.  We generate a system with $A \in \mathbb{R}^{20000 \times 100}$ generated with i.i.d.\ entries sampled from $\mathcal{N}(0,1)$, then row-normalize $A$, generate $\vx^* \in \mathbb{R}^{100}$ with i.i.d.\ entries sampled from $\mathcal{N}(0,100)$, and $\vb^{(k)} = A\vx^* + \vc^{(k)}$ where in each iteration we have corruption vector $\vc^{(k)}$ defined by $\beta = |\text{supp}(\vc^{(k)})|/m = 0.001$ with randomly selected entries set to $10$. Finally, we implement QRK on this system with a quantile of size $q = 0.6$.  We plot the fraction of corrupted indices that are included in the largest magnitude $\beta m$ entries of the residuals of the QRK iterates.  We also plot the lower bound on the probability that all $\beta m$ corrupted indices are included in the largest magnitude entries of the residuals of the QRK iterates, that is, the probability that these entries are easily detectable.  We plot these two values over all 8000 iterations of the QRK trial in Figure~\ref{fig:corr_detection}.  We note that the lower bound on the probability provided in Corollary~\ref{cor:noisy corruption detection} is a conservative estimate given how quickly the QRK residuals reveal the time-varying corrupted indices.  This bound is likely loose due to the looseness of the upper bound on the convergence rate.

\begin{figure}
    \includegraphics[width=0.8\textwidth]{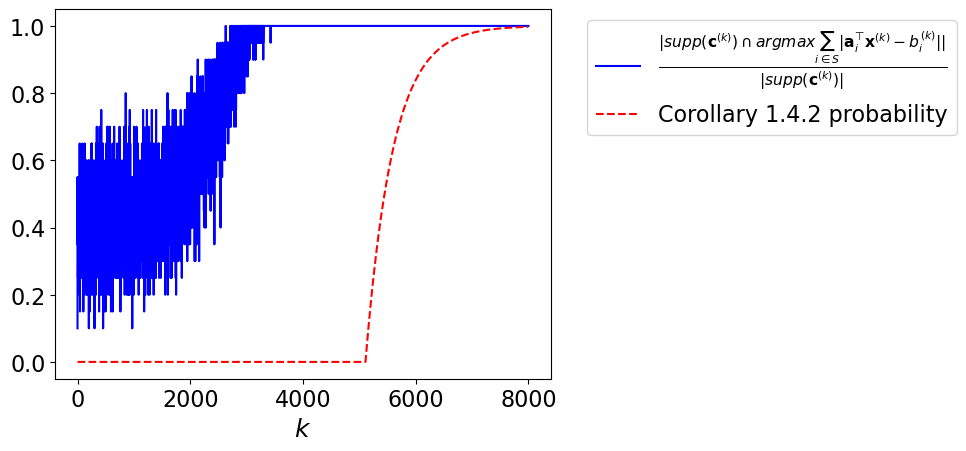}
    \caption{Fraction of corrupted indices in the largest magnitude entries of the QRK ($q = 0.6$) iterate residuals and lower bound on probability of detection provided by Corollary~\ref{cor:noisy corruption detection} on system defined by $A \in \mathbb{R}^{20000 \times 100}$ and $\vb^{(k)} = A\vx^* + \vc^{(k)}$ with corruption rate $\beta = 0.001$.}\label{fig:corr_detection}
\end{figure}

\subsection{RK on systems with time-varying noise}
In this section, we compare the upper bound provided by Theorem~\ref{thm:noisyRK} to the empirical behavior of RK on systems with noise vectors $\vn^{(k)}$ with entries drawn i.i.d.\ from a distribution with mean $\mu$ and standard deviation $s$.  We generate a system with $A \in \mathbb{R}^{20000 \times 100}$ generated with i.i.d.\ entries sampled from $\mathcal{N}(0,1)$, $\vx^* \in \mathbb{R}^{100}$ with i.i.d.\ entries sampled from $\mathcal{N}(0,1)$, and $\vb^{(k)} = A\vx^* + \vn^{(k)}$ where in each iteration $\vn^{(k)}$ is sampled from $\mathcal{N}(\mu,s^2)$ where $\mu \in \{0, 0.01, 0.1\}$ and $s \in \{0, 0.01, 0.1\}$ are noted.  In Figure~\ref{fig:thm2.1mu}, we plot the error of the RK method and the upper bounds on this error given in Theorem~\ref{thm:noisyRK} on these systems.  We note that these upper bounds are quite tight due to the consistent form of the sampled noise across iterations.

\begin{figure}
    \includegraphics[width=0.49\textwidth]{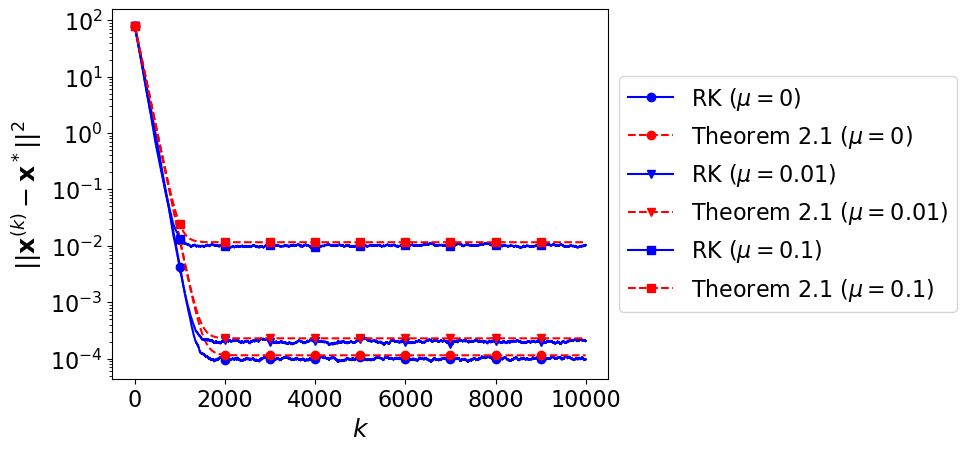}
    \includegraphics[width=0.49\textwidth]{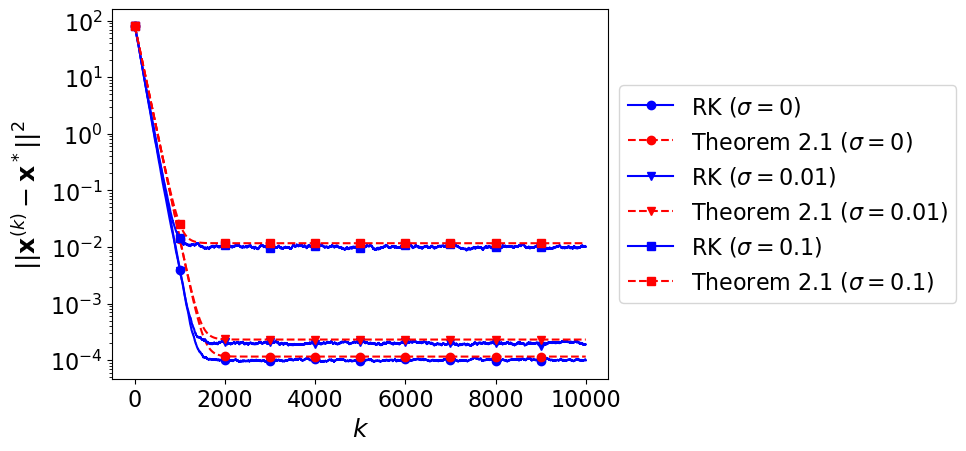}
    \caption{Empirical behavior of RK and upper bound provided in Theorem~\ref{thm:noisyRK} on system defined by $A \in \mathbb{R}^{20000 \times 100}$ and $\vb^{(k)} = A\vx^* + \vn^{(k)}$ where noise $\vn^{(k)}$ is an i.i.d.\ random vector sampled in each iteration with entries from $\mathcal{N}(\mu,s^2)$. The plot on the left shows behavior as $\mu$ varies between 0, 0.01, and 0.1, while $s=0.01$. The plot on the right shows behavior as $s$ varies between 0, 0.01, and 0.1, while $\mu=0.01$. Experiments are averaged over 10 trials.}\label{fig:thm2.1mu}
\end{figure}

\section{Conclusion}\label{sec:conclusion}

In this work, we prove that the QRK method converges even in the presence of \emph{time-varying} perturbation by noise and corruption.  We provide an upper bound on the error of the QRK iterates, which includes a convergence horizon term dictated by the magnitude of the time-varying noise and a rate term that depends upon the corruption rate of the time-varying corruption perturbing the system.
 We specialize this result to several noise models: bounded noise, random noise with given mean and standard deviation, and noise sampled from $\mathcal{N}(0,s^2)$.  We additionally use these results to provide a lower bound on the probability that the corrupted indices will be revealed by examining the QRK iterate residuals.

Our numerical experiments illustrate our theoretical results and reveal that many of our estimates are conservative.  These results reflect the qualitative behavior of the QRK method, but are quite loose, likely due to the looseness of the convergence rate estimate and the convergence horizon bound.

We highlight that our results are tailored to hold for time-varying and potentially \emph{adversarial} corruption.  A very interesting future question, given by Steinerberger~\cite{steinerberger2021quantile}, is whether QRK can be shown to converge for a larger fraction of corruptions, say near 50\%, when the corrupted indices are chosen at \emph{random}.  This corruption model seems very reasonable for a distributed computing application where corruption could occur during data access.

Another interesting future question is to develop a variant of RK that is robust to corruption and noise simultaneously, breaking the convergence horizon suffered by QRK in the presence of noise.  In the corruption-free case, the \emph{Randomized Extended Kaczmarz (REK)} method~\cite{zouzias2015randomized} utilizes a column-wise projection step, in addition to the usual RK projections, to break the convergence horizon and converge to the least-squares solution.  Providing such a method which is simultaneously robust to corruption would be a useful contribution.



\section*{Funding Statement}

The authors are grateful to and were partially supported by NSF DMS \#2211318.

\section*{Acknowledgements}
The authors are grateful to Anna Ma and Emeric Battaglia for useful conversations and helpful suggestions on this manuscript.



\bibliography{bibliography}
\bibliographystyle{abbrv}

\end{document}